\newtheorem{theorem}{Theorem}[section]
\newtheorem{lemma}[theorem]{Lemma}
\newtheorem{corollary}[theorem]{Corollary}
\newtheorem{proposition}[theorem]{Proposition}
\theoremstyle{definition}
\newtheorem{example}[theorem]{Example}
\newtheorem{question}[theorem]{Question}
\newtheorem{convention}[theorem]{Convention}
\newtheorem{remark}[theorem]{Remark}
\newtheorem{notation}[theorem]{Notation}
\newcommand{\del}{\partial}
\newcommand{\Z}{\mathbb{Z}}
\newcommand{\R}{\mathbb{R}}
\newcommand{\C}{\mathbb{C}}
\newcommand{\CP}{\mathbb{CP}}
\newcommand{\Int}{\textrm{Int}}
\newcommand{\M}{\mathcal{M}}
\newcommand{\PD}{\mathrm{PD}}
\newcommand{\p}{\partial}
\newcommand{\ow}{\omega}
\newcommand{\lda}{\lambda}
\renewcommand{\O}{\mathcal{O}}
\renewcommand{\P}{\mathbb{P}}
\definecolor{cadmiumgreen}{rgb}{0.0, 0.42, 0.24}
\definecolor{asparagus}{rgb}{0.53, 0.66, 0.42}
\definecolor{ao(english)}{rgb}{0.0, 0.5, 0.0}
\definecolor{amethyst}{rgb}{0.6, 0.4, 0.8}
\definecolor{blush}{rgb}{0.87, 0.36, 0.51}
\definecolor{brown(web)}{rgb}{0.65, 0.16, 0.16}
\definecolor{fandango}{rgb}{0.71, 0.2, 0.54}
\definecolor{fluorescentpink}{rgb}{1.0, 0.08, 0.58}
\definecolor{jazzberryjam}{rgb}{0.65, 0.04, 0.37}
\definecolor{lavenderpurple}{rgb}{0.59, 0.48, 0.71}
\definecolor{majorelleblue}{rgb}{0.38, 0.31, 0.86}
\definecolor{applegreen}{rgb}{0.55, 0.71, 0.0}
\definecolor{amber}{rgb}{1.0, 0.75, 0.0}
\definecolor{amber(sae/ece)}{rgb}{1.0, 0.49, 0.0}
\title{Rational ruled surfaces as symplectic hyperplane sections}
\author[Myeonggi Kwon]{Myeonggi Kwon}
\address{Department of Mathematics Education, Sunchon National University, Suncheon 57922, Republic of Korea}
\email{mkwon@scnu.ac.kr}
\author[Takahiro Oba]{Takahiro Oba}
\address{Department of Mathematics, Kyoto University, Kyoto 606-8502, Japan}
\curraddr{Department of Mathematics, Osaka University, Toyonaka, Osaka 560-0043, Japan}
\email{taka.oba@math.sci.osaka-u.ac.jp}
\date{\today}
\begin{document}

\maketitle

\begin{abstract}
We study embeddability of rational ruled surfaces as symplectic hyperplane sections into closed integral symplectic manifolds. From this we obtain results on Stein fillability of Boothby--Wang bundles over rational ruled surfaces. 

\end{abstract}


\section{Introduction}

The aim of this paper is to study the symplectic topology of 
certain symplectic hypersurfaces of closed 
integral symplectic manifolds, 
called symplectic hyperplane sections. 
By an integral symplectic manifold, we mean a symplectic manifold $(M, \omega)$ with 
$[\omega] \in H^{2}(M; \Z)$.
Motivated by the notion of ample divisors in complex geometry, we define 
a \textit{symplectic hyperplane section} of degree $k>0$ 
on an integral symplectic manifold $(M, \omega)$   
to be a symplectic submanifold $\Sigma \subset M$ of codimension $2$ 
such that 
the homology class $[\Sigma] \in H_{2}(M; \Z)$ is Poincar\'{e} dual to $k[\omega]$ and 
the complement $M \setminus \Sigma$ admits a Stein structure. 
A good source of symplectic hyperplane sections is Donaldson's construction \cite{Don}, which 
guarantees the existence of a symplectic hyperplane section of sufficiently large degree on a given closed integral symplectic manifold.

In complex geometry, where divisors have played an important role, rigidity aspects of complex manifolds are often captured 
by the existence of ample divisors. For example carrying an ample divisor forces the ambient space to be projective via Kodaira embedding.
The Lefschetz hyperplane theorem gives a strong restriction for a projective manifold 
to be an ample divisor. 
Sommese \cite{Somm} provided projective manifolds 
that cannot be embedded into any projective manifolds as ample divisors. 
See also \cite{Fuji} and \cite{Silva}.

Inspired by complex geometry it is interesting to study rigidity and flexibility aspects of symplectic manifolds in terms of the existence of symplectic hyperplane sections.
We can show that
every closed Riemann surface $(\Sigma, \ow)$ with symplectic volume 
$\int_{\Sigma} \omega \in \Z_{>0}$ can be embedded into a closed symplectic $4$-manifold as a symplectic hyperplane section. 
In dimension $4$, symplectic structures are more sensitive to embedability. 
The projective space $\CP^2$ with $\omega_{\textrm{FS}}$ is a symplectic hyperplane section on $(\CP^3, \omega_{\textrm{FS}})$ where $\omega_{\textrm{FS}}$ denotes the respective Fubini--Study form; 
The same space with multiple $k \omega_{\textrm{FS}}$ ($k \geq 2$) cannot be a symplectic hyperplane section on any integral symplectic $6$-manifold.
This is a consequence of Stein non-fillability of a contact structure on a certain $S^{1}$-bundle over $(\CP^{2}, k\omega_{\textrm{FS}})$ (see \cite{PP} for example).

In this paper we study the next simplest symplectic $4$-manifolds, namely \textit{rational ruled surfaces}, as symplectic hyperplane sections. There are two diffeomorphism types of them: 
the product bundle $S^{2} \times S^{2}$ and the non-trivial bundle $S^{2} \widetilde{\times} S^{2}$. 
We equip them with the symplectic forms $\omega_{a,b}$ and $\widetilde{\omega}_{a,b}$, respectively, for each $a,b \in \Z_{>0}$, characterized by the equations (\ref{eqn: omega}) and (\ref{eqn: omega_tilde}) in Sections \ref{sec: Symplectic structures on S2xS2} and \ref{sec: Symplectic structures on S2tilde{x}S2}, respectively.

\begin{theorem}\label{thm: embeddability}
Let $(S^{2} \times S^{2}, \omega_{a,b})$ and $(S^{2} \widetilde{\times} S^{2}, \widetilde{\omega}_{a,b} )$ 
be symplectic manifolds as above. 

\begin{enumerate}
\item\label{item: embedding} For any  $a \geq 1$ (resp. $a \geq 2$), 
the symplectic manifold $(S^{2} \times S^{2}, \omega_{a,1})$ (resp. $(S^{2} \widetilde{\times} S^{2}, \widetilde{\omega}_{a,1} )$) 
can be embedded into a closed integral symplectic $6$-manifold as a symplectic hyperplane section of degree $1$. 
 
\item\label{item: non-embedding} For any odd $a \geq 5$ (resp. $a \geq 7$), 
the symplectic manifold $(S^{2} \times S^{2}, \omega_{a,2})$ 
(resp. $(S^{2} \widetilde{\times} S^{2}, \widetilde{\omega}_{a,2} )$) 
cannot be embedded into any closed integral symplectic $6$-manifold as a symplectic hyperplane section 
of degree $1$. 
\end{enumerate}
\end{theorem}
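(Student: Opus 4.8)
The plan is to recast both statements as questions about Stein fillings of a Boothby--Wang (prequantization) circle bundle. Suppose $\Sigma$ is a degree-$1$ symplectic divisor in a closed integral symplectic $(M,\omega)$. Then $[\Sigma]$ is Poincar\'e dual to $[\omega]$, so the symplectic normal bundle of $\Sigma$ has Euler class $e=[\omega]|_{\Sigma}=[\omega_{\Sigma}]$, which is integral. A symplectic tubular neighborhood therefore presents a disk-bundle cap $D_{\Sigma}$ whose concave contact boundary is the prequantization bundle $(Y_{a,b},\xi_{a,b})$ of $(\Sigma,\omega_{\Sigma})$, and by definition $M\setminus\Sigma$ carries a Stein structure whose underlying Stein domain fills $(Y_{a,b},\xi_{a,b})$. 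Conversely, capping any Stein filling $W$ of $(Y_{a,b},\xi_{a,b})$ with $D_{\Sigma}$ yields a closed integral symplectic $6$-manifold in which $\Sigma$ is a degree-$1$ divisor. Thus (\ref{item: embedding}) reduces to \emph{constructing} a Stein filling of $(Y_{a,1},\xi_{a,1})$, while (\ref{item: non-embedding}) reduces to \emph{ruling one out} for $(Y_{a,2},\xi_{a,2})$.

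\textbf{Part (\ref{item: embedding}).}
For $b=1$ the fiber area is minimal, and I would exhibit the filling by algebraic geometry. The cleanest route is to realize $\Sigma$ as an ample toric divisor in a smooth projective toric $3$-fold $M$ whose fan restricts to that of $\Sigma$ with normal bundle of Chern class $a\alpha+\beta$ (and the analogous positive class on the Hirzebruch surface in the twisted case). Ampleness makes $M\setminus\Sigma$ affine, hence Stein, and rescaling a toric K\"ahler form arranges $[\omega]\in H^2(M;\Z)$ with $[\Sigma]=\mathrm{PD}([\omega])$; equivalently one presents $(Y_{a,1},\xi_{a,1})$ as the link of an isolated singularity bounding an explicit Stein domain. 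The shifted threshold $a\ge 2$ in the twisted case is exactly the positivity needed for the associated line bundle on the Hirzebruch surface, and hence for the toric $3$-fold, to be projective and smooth.

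\textbf{Part (\ref{item: non-embedding}).}
This is the heart of the argument and the step I expect to be hardest. I would obstruct Stein fillability of $(Y_{a,2},\xi_{a,2})$ by a contact invariant extracted from Reeb dynamics: the mean Euler characteristic $\chi_{\mathrm{m}}$ of its cylindrical contact homology, equivalently of positive $S^1$-equivariant symplectic homology. In the Boothby--Wang picture the Reeb orbits are the circle fibers and their iterates, and the Conley--Zehnder index of the $m$-th iterate over a critical point of a Morse function on $\Sigma$ is controlled by $c_1(\Sigma)$ and the Euler class $e=[\omega_{a,2}]$. A Morse--Bott computation then assembles the graded homology out of $H^{*}(\Sigma)$ with degree shifts that depend linearly on $a$ through intersection numbers such as $c_1(M)\cdot B=a+2$ for a sphere class $B$ with $\langle[\omega_{a,2}],B\rangle=a$. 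The upshot is that the parity of $a$ dictates whether the dominant iterated-orbit contributions land in even or odd degree, and therefore fixes the sign of $\chi_{\mathrm{m}}$. On the other hand a Stein fillable contact $5$-manifold forces $\chi_{\mathrm{m}}$ into a constrained range: any Stein filling $W^6$ is homotopy equivalent to a $3$-complex, so comparing the Reeb-side computation with the topology of such a $W$ pins down the admissible sign and size of $\chi_{\mathrm{m}}$. For odd $a$ beyond the stated thresholds these are incompatible, giving the desired non-embeddability; the twisted case is identical up to the shift to $a\ge 7$ coming from the nonzero self-intersection of the section.

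\textbf{Main obstacle.}
The genuine work lies entirely in Part (\ref{item: non-embedding}): carrying out the Morse--Bott index bookkeeping so that the dependence of $\chi_{\mathrm{m}}$ on the parity of $a$ is exact, and converting the statement that $W$ is a Stein $6$-manifold into a sharp numerical constraint on $\chi_{\mathrm{m}}$. The hypothesis that $a$ be large is precisely what guarantees that this asymptotic, averaged invariant dominates the finite topological corrections and so triggers a clean contradiction at the stated bounds.
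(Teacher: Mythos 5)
Your Part (\ref{item: embedding}) is in the right spirit (realize the surface as an ample divisor in a projective $3$-fold, so that the complement is affine and hence Stein), though the paper's actual models are the quadric $3$-fold in $\CP^{4}$ for $a=1$ (which is not toric) and hypersurfaces $Z(\sigma_{1}\otimes\pi^{*}s_{1}+\epsilon\sigma_{2}\otimes\pi^{*}s_{2})$ in $\P(\O(-m)\oplus\O\oplus\O)$ for $a\geq 2$; the existence of the toric $3$-folds you posit is asserted rather than constructed, and B\u{a}descu's classification shows such ambient spaces are genuinely constrained, so this step cannot be waved through. The serious problems are in your reduction and in Part (\ref{item: non-embedding}).

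First, the reduction of non-embeddability to non-fillability of the Boothby--Wang bundle does not follow from the definitions. A symplectic divisor only requires that the open manifold $M\setminus\Sigma$ admit \emph{some} Stein structure; nothing forces that Stein structure to be compatible with $\omega$, or to restrict on the boundary of a tubular neighborhood of $\Sigma$ to the Boothby--Wang contact structure $\xi_{a,2}$. So ``$\Sigma$ embeds as a degree-$1$ divisor'' does not obviously yield a Stein filling of $(Y_{a,2},\xi_{a,2})$. The paper uses only the converse implication (a Stein filling of the Boothby--Wang bundle can be capped by the disk bundle to produce a closed integral symplectic $6$-manifold containing $\Sigma$ as a divisor, Proposition \ref{prop: gluingandembedded}), and it proves non-embeddability \emph{directly} in the closed manifold, then deduces non-fillability --- the opposite logical order from yours.

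Second, the proposed obstruction does not exist as stated. These contact manifolds \emph{are} strongly fillable (the associated disk bundle is a strong filling), so any obstruction must separate Stein from strong fillings; the mean Euler characteristic is not known to be a contact invariant here without index-positivity/ADC-type hypotheses, and there is no theorem of the form ``$W$ is a Stein $6$-manifold, hence a $3$-complex, hence $\chi_{\mathrm{m}}(\partial W)$ lies in a constrained range'' --- the positive $S^{1}$-equivariant symplectic homology of a Stein filling is not determined by its homotopy type. Moreover the role of ``$a$ odd'' in the paper is arithmetic, not a parity of gradings: one chooses $J_{0}$ standard near $\Sigma$, shows the fiber class $B$ (with $\omega(B)=2$) is $J_{0}$-indecomposable for $a\geq m+5$, and proves the evaluation map from the moduli space of $J_{0}$-spheres in class $B$ through $Q_{0}$ and $Q_{\infty}$ is proper of degree one; sweeping loops in $X=M\setminus\Sigma$ through this family identifies $\pi_{1}(X)\cong\Z/(a-2m)\Z$, generated by a meridian of $\Sigma$. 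That same meridian lies in the punctured normal disk bundle over a fiber sphere $R\subset\Sigma$ with $c_{1}(N_{\Sigma/M})([R])=2$, so its order divides $2$; since $\gcd(a-2m,2)=1$ and $a-2m>1$, this is a contradiction. Your sketch contains no substitute for this mechanism, so Part (\ref{item: non-embedding}) is not established.
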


We construct embeddings in Theorem \ref{thm: embeddability} from complex geometry. 
On the other hand the non-embeddability result comes from a holomorphic curve technique in symplectic geometry. 
We derive topological information on the complement of a symplectic hyperplane section 
from analyzing a moduli space of holomorphic spheres in the ambient closed symplectic manifold. 
This approach was studied by McDuff, Floer and Eliashberg \cite{McD} and 
has developed in several directions; see for example \cite{McD2}, \cite{Hind2}, \cite{OO}, \cite{Wen2}, \cite{BGZ} and the references therein. 
Our proof is mainly inspired by Hind \cite{Hind}. 

We would like to point out that B\u{a}descu \cite{Ba2, Ba} 
classified complex projective $3$-manifolds 
which contain rational ruled surfaces as ample divisors. 
In principle, his result gives the list of symplectic forms on rational ruled surfaces 
with which they cannot be symplectic hyperplane sections on any projective $3$-manifolds. 
Our non-embeddability result can be seen as an extension of this to symplectic hyperplane sections on symplectic $6$-manifolds. 

We apply Theorem \ref{thm: embeddability} to address the fillability problem of contact manifolds in light of \cite[Section 4.1.2]{BiranECM}.
Let $\Sigma$ be a symplectic hyperplane section on an integral symplectic manifold $(M,\omega)$.
Then the boundary of the complement $W$ of a tubular neighborhood of $\Sigma$ in $M$ carries a canonical principal $S^{1}$-bundle structure over $\Sigma$ with a contact structure. 
We call this bundle the \textit{Boothby--Wang bundle} over $\Sigma$ and its total space 
the \textit{Boothby--Wang manifold}
(see Section \ref{section: BW}).
Applying Theorem \ref{thm: embeddability} we prove 
the following fillability results. 

\begin{theorem}\label{thm: fillability}
Let $(P_{a,b}, \xi_{a,b})$ (resp. $(\widetilde{P}_{a,b}, \widetilde{\xi}_{a,b})$) 
be the Boothby--Wang manifold over $(S^{2} \times S^{2}, \omega_{a,b})$ 
(resp. $(S^{2} \widetilde{\times} S^{2}, \widetilde{\omega}_{a,b} )$ ). 

\begin{enumerate}
\item\label{item: critically fillable} The contact structure $\xi_{1,1}$ is critically Stein fillable. 

\item\label{item: subcritically fillable} The contact structures $\xi_{a,1}$ and $\widetilde{\xi}_{a,1}$ for $a \geq 2$ 
are subcritically Stein fillable.

\item\label{item: non-fillable} The contact structure $\xi_{a,2}$ for any odd $a \geq 5$ (resp. $\widetilde{\xi}_{a,2}$ for any odd $a \geq 7$) is not Stein fillable. 

\end{enumerate}
\end{theorem}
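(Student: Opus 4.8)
The plan is to derive Theorem~\ref{thm: fillability} directly from Theorem~\ref{thm: embeddability} by exploiting the correspondence between symplectic divisors and Boothby--Wang bundles described in Section~\ref{section: BW}. The key structural fact I would use is that if $\Sigma$ is a degree-$1$ symplectic divisor on a closed integral symplectic $6$-manifold $(M, \omega)$, then the complement $W = M \setminus \nu(\Sigma)$ of a tubular neighborhood is, by the very definition of symplectic divisor, a Stein domain whose boundary $\partial W$ is precisely the Boothby--Wang manifold over $\Sigma$ carrying its canonical contact structure. Thus a degree-$1$ symplectic divisor embedding produces \emph{by construction} a Stein filling of the associated Boothby--Wang contact manifold, and the absence of any such embedding obstructs Stein fillability. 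This is the bridge that turns each clause of Theorem~\ref{thm: embeddability} into a fillability statement.

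For parts~\eqref{item: critically fillable} and~\eqref{item: subcritically fillable}, I would start from the explicit embeddings constructed in Theorem~\ref{thm: embeddability}\eqref{item: embedding}. Since those embeddings come from complex geometry, the complement $W$ is not merely Stein but carries additional structure coming from the concrete model, and I would read off the Morse-theoretic type of its Stein handle decomposition. For $\xi_{1,1}$, the complement of $(S^2 \times S^2, \omega_{1,1})$ as a degree-$1$ divisor should be a Stein domain admitting a plurisubharmonic Morse function with a handle of top index $3$ (half the real dimension), giving \emph{critical} fillability; I would verify this by identifying the explicit affine model whose regular sublevel sets are the Boothby--Wang manifold. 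For the cases $a \geq 2$ in part~\eqref{item: subcritically fillable}, the relevant complements should instead be \emph{subcritical}---arising as products or otherwise admitting plurisubharmonic functions with no critical points of top index---and I would argue that the complex-geometric model realizes $W$ as a subcritical Stein domain, perhaps by exhibiting it as a disk bundle or a split Stein manifold.

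Part~\eqref{item: non-fillable} is where the real content lies, and it follows formally once the bridge above is set up: if $\xi_{a,2}$ (resp. $\widetilde{\xi}_{a,2}$) were Stein fillable, one could cap off the filling $W$ with a symplectic tubular neighborhood of the divisor to reconstruct a closed integral symplectic $6$-manifold containing $(S^2 \times S^2, \omega_{a,2})$ (resp. $(S^2 \widetilde{\times} S^2, \widetilde{\omega}_{a,2})$) as a degree-$1$ symplectic divisor, contradicting Theorem~\ref{thm: embeddability}\eqref{item: non-embedding} for odd $a \geq 5$ (resp. $a \geq 7$). The one point requiring care is the capping-off step: I must confirm that an arbitrary Stein filling of the Boothby--Wang contact manifold can be symplectically glued to the standard disk-bundle neighborhood of the divisor to yield a \emph{closed integral} symplectic manifold, so that the hypotheses of the non-embeddability theorem are genuinely met. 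This neighborhood-and-gluing compatibility is the main obstacle, since it requires matching the contact structure on $\partial W$ with the canonical one on the boundary of the divisor neighborhood and ensuring the resulting cohomology class remains integral with $[\Sigma]$ Poincar\'e dual to $[\omega]$; once this is established, the three fillability claims follow from their embeddability counterparts.
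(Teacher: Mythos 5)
Your overall strategy --- reading parts (1) and (2) off the explicit embeddings of Theorem \ref{thm: embeddability}(\ref{item: embedding}), and proving part (3) by capping off a hypothetical Stein filling with the disk bundle of the divisor to contradict Theorem \ref{thm: embeddability}(\ref{item: non-embedding}) --- is exactly the paper's route. For part (2) the paper makes your sketch precise by citing Biran--Cieliebak \cite[Theorem 2.4.1]{BC}, which says that the complement of $\Sigma_{m,n}$ in $\P(F_m)$ is a subcritical Stein domain; and for part (3) the paper's Proposition \ref{prop: gluingandembedded} carries out the gluing you flag as the delicate step (matching the Liouville form on $\partial W$ with the connection form via \cite[Lemma 4.25]{Gu19}, and verifying $[\omega]=\mathrm{PD}[\Sigma]$ through the Thom class of the normal bundle). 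Those parts of your proposal are sound in outline.

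There is, however, a genuine gap in your argument for part (\ref{item: critically fillable}). By the paper's definition, $\xi_{1,1}$ is \emph{critically} Stein fillable means it is Stein fillable but admits \emph{no} subcritical Stein filling whatsoever. Exhibiting one particular filling --- the affine quadric $X \cong T^*S^3$ --- together with a plurisubharmonic Morse function having a handle of top index $3$ only shows that \emph{this} filling is critical; it does not rule out that some entirely different Stein domain fills $(P_{1,1},\xi_{1,1})$ subcritically. To close this gap the paper invokes \cite[Theorem 1.2(1)]{BGZ}: all Stein fillings of a subcritically Stein fillable contact manifold have isomorphic homology, and in particular vanishing middle-dimensional homology. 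Since $H_3(X;\Z)\cong H_3(T^*S^3;\Z)\neq 0$, the existence of the filling $X$ already precludes any subcritical filling. Without an input of this kind (a filling-independent invariant that detects subcriticality), your argument establishes only that $\xi_{1,1}$ is Stein fillable, not that it is critically so.
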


In the theorem, a Stein fillable contact structure $\xi$ on $P$ is said to be \textit{subcritically Stein fillable} if 
$(P, \xi)$ admits a subcritical Stein filling; 
otherwise, we call it \textit{critically Stein fillable}. See  Remarks \ref{rem: notsubcriticalfillable} and \ref{rem: almostWeinconstrs} for further statements on fillability.

It is known that the contact structures $\xi_{a,b}$ and $\xi_{a',b'}$ (resp. $\widetilde{\xi}_{a,b}$ and $\widetilde{\xi}_{a',b'}$) are equivalent as almost contact structures if and only if $a-b=a'-b'$ (resp. $2a-3b=2a'-3b'$) (see Corollary \ref{cor: equivalencefoalmostcontactstructures}). 
Lerman \cite[Question 1]{Ler} asked whether 
they are actually contactomorphic or not when their almost contact structures are equivalent. 
Theorem \ref{thm: fillability} gives a partial negative answer to his question from the fillability viewpoint. 
We would like to refer to the result in Boyer--Pati \cite{BP} which proves using contact homology that if $(a,b) \neq (a',b')$, they are not contactomorphic.

It is worth emphasizing that our non-fillability result is symplectic in nature. It is known that there are Stein non-fillable contact structures on $5$-manifolds (see \cite{BCS}, \cite{EKP} and \cite{PP} for example): In fact, these known $5$-manifolds cannot admit Stein fillable contact structures 
for topological reasons.
In contrast, our $5$-manifolds $P_{a,b}$ and $\widetilde{P}_{a,b}$, which are diffeomorphic to 
the trivial $S^{3}$-bundle 
$S^{2} \times S^{3}$ and the non-trivial one $S^{2} \widetilde{\times} S^{3}$ over $S^{2}$  respectively (see Proposition \ref{prop: BW}), carry Stein fillable contact structures. 
Moreover, each Stein non-fillable contact structure in the theorem is equivalent to a subcritically Stein fillable contact structure as almost contact structures.

This paper is organized as follows: 
Section \ref{section: embeddability} deals with the embedding problem of symplectic rational ruled surfaces. 
We first describe rational ruled surfaces as complex surfaces and construct symplectic forms on them in Section \ref{section: rational ruled surfaces}. 
After this, we give proofs of Theorem \ref{thm: embeddability} (\ref{item: embedding}) and (\ref{item: non-embedding}) in 
Section \ref{section: embedding} and \ref{section: non-embeddability}, respectively. 
Applying Theorem \ref{thm: embeddability}, we discuss the fillability problem in Section \ref{section: fillability}. 
Section \ref{section: BW} is devoted to reviewing the topology of Boothby--Wang bundles over rational ruled surfaces. 
A proof of Theorem \ref{thm: fillability} is provided in Sections \ref{section: fillable} and \ref{section: non-fillable}.
Finally, we conclude this paper by answering Lerman's question in Section \ref{section: almost contact}.

\begin{convention}\label{convention}
Since we are primarily interested in symplectic hyperplane sections of degree $1$, symplectic hyperplane sections in this paper are assumed to be of degree $1$ unless otherwise noted.
\end{convention}

\begin{remark}
Although the term \textit{symplectic hyperplane section} is used in some papers (see for example \cite{BiKh13} and \cite{DL}), 
definitions may slightly vary from paper to paper: 
Biran and Khanevsky \cite{BiKh13} impose nothing on its complement; 
Diogo and Lisi \cite{DL} do not require the cohomology class $[\omega]$ to have an integral lift. 
In this comparison, our definition is stronger than theirs. 
There is also a related notion called a \textit{smoothly polarized K\"ahler manifold},  which is a tuple of a K\"ahler manifold $(M, \omega, J)$ with integral $\omega$ and a complex hypersurface $\Sigma$ with $\mathrm{PD}[\Sigma]=k[\omega]$ for some positive integer $k$; see \cite{Bi01}.
\end{remark}

\subsection*{Acknowledgements} The authors would like to thank Otto van Koert and Zhengyi Zhou for helpful comments, and Akihiro Kanemitsu and Ryo Yamagishi for enlightening conversations on algebraic geometry. The first author was supported by the SFB/TRR 191 \emph{Symplectic Structures in Geometry, Algebra and Dynamics} funded by the DFG and by the National Research Foundation of Korea(NRF) grant funded by the Korea government(MSIT) (No. NRF-2021R1F1A1060118). The second author was supported by Japan Society for the Promotion of Science KAKENHI Grant Number 18J01373.

\section{Embedding problem of rational ruled surfaces}\label{section: embeddability}

\subsection{Rational ruled surfaces}\label{section: rational ruled surfaces}
We consider smooth $4$-manifolds which fiber over the $2$-sphere $S^{2}$ with fibers 
diffeomorphic to $S^{2}$. 
Motivated by complex geometry, we call such $4$-manifolds \textit{rational ruled surfaces}. 
It is known that there are only two $S^2$-bundle over $S^2$ (e.g. see \cite[Lemma 6.2.3]{McS}). In this subsection, we describe symplectic structures on them parametrized by pairs of two integers.

\subsubsection{Symplectic structures on $S^2 \times S^2$}\label{sec: Symplectic structures on S2xS2}

Let $\mathrm{pr}_i \colon S^2 \times S^2 \rightarrow S^2$ denote the $i$-th projection and $\omega_0$ the standard area form on $S^2$ normalized so that $\int_{S^2} \omega_0=1$. 
For positive integers $a$ and $b$, we define a symplectic form $\omega_{a,b}$ by 
\begin{equation}\label{eqn: omega}
	\omega_{a,b}=a\mathrm{pr}_{1}^{*}\omega_0+b\mathrm{pr}_{2}^{*}\omega_0. 
\end{equation}
This form is K\"ahler with respect to the standard product complex structure on $S^2 \times S^2$.

\subsubsection{Symplectic structures on $S^2 \widetilde{\times} S^2$}\label{sec: Symplectic structures on S2tilde{x}S2}

Next we describe symplectic structures on the non-trivial bundle $S^2 \widetilde{\times} S^2$. 
For later discussions, these symplectic forms need to be K\"ahler. 
Hence, identifying $S^2 \widetilde{\times} S^2$ with the complex manifold $\CP^2 \# \overline{\CP}^2$, we will present K\"ahler forms on $\CP^2 \# \overline{\CP}^2$ rather than $S^2 \widetilde{\times} S^2$. 

An $S^2$-bundle structure on $\CP^2 \# \overline{\CP}^2$ can be seen as follows (see also \cite[Example 7.1.5]{McS}): 
Let $f_0, f_1 \in \C[z_0, z_1, z_2]$ be generic homogeneous polynomials of degree $1$. 
These polynomials yield a family of hyperplanes 
$
	\{ [z] \in \CP^2 \mid \mu f_0(z)+\lambda f_1(z)=0 \}
$ 
parametrized by $[\lambda : \mu] \in \CP^1$, which intersect at one point, $x_0$. 
Thus, we obtain the map $p \colon \CP^{2} \setminus \{ x_0 \} \rightarrow \CP^1$ defined by 
$$
	p(z)=[f_0(z): f_1(z)]. 
$$
Now blowing up the point $x_0$ of $\CP^2$, we find that the map $p$ turns to a $\CP^1$-bundle over $\CP^1$, denoted by $\tilde{p} \colon \CP^2 \# \overline{\CP}^2 \rightarrow \CP^1$.
Write $\pi \colon \CP^2 \# \overline{\CP}^2 \rightarrow \CP^2$ for the blowing-up map: 
$$
  \xymatrix{
    & \CP^2 \# \overline{\CP}^2 \ar[ld]_{\tilde{p}} \ar[rd]^{\pi} & \\
    \CP^1 & & \CP^2.
  }
$$
Note that the maps $\pi$ and $\tilde{p}$ are $(J, J_{\textrm{st}})$-holomorphic, where  $J$ and $J_{\textrm{st}}$ denote the standard complex structures on $\CP^{2} \# \overline{\CP}^2$ and $\CP^{n}$ for $n=1,2$. 

For integers $a, b>0$ with $a>b$, we define the $2$-form $\widetilde{\omega}_{a,b}$ on $\CP^2 \# \overline{\CP}^2  (\approx S^2 \widetilde{\times} S^2)$ by 
\begin{equation}\label{eqn: omega_tilde}
	\widetilde{\omega}_{a,b}=(a-b)\tilde{p}^{*}\omega_0+a\pi^{*}\omega_{\textrm{FS}}.
\end{equation}
Here $\omega_{\textrm{FS}}$ denotes the Fubini--Study form on $\CP^2$ normalized so that $\int_{\CP^2}\omega_{\textrm{FS}}=1$.

\begin{lemma}\label{lemma: Kahler}
The $2$-form $\widetilde{\omega}_{a,b}$ is a K\"ahler form on $(\CP^2 \# \overline{\CP}^2, J)$.
\end{lemma}

\begin{proof}
Since $\omega_{\textrm{FS}}$ and $\omega_0$ are closed, so is $\widetilde{\omega}_{a,b}$. 
To see the non-degeneracy of $\widetilde{\omega}_{a,b}$, fix a point $x \in \CP^2 \# \overline{\CP}^2$ and take a non-zero tangent vector $v \in T_{x}(\CP^2 \# \overline{\CP}^2)$. 
The condition that $v \neq 0$ implies that $\pi_{*}v \neq 0$ or $\tilde{p}_{*} v \neq 0$. 
Hence, 
\begin{eqnarray*}
	\widetilde{\omega}_{a,b}(v, Jv) & = & (a-b)\tilde{p}^{*}\omega_0(v,Jv)+a\pi^{*}\omega_{\textrm{FS}}(v,Jv) \\
	& = & (a-b)\omega_0(\tilde{p}_{*}v, \tilde{p}_{*}(Jv))+a\omega_{\textrm{FS}}(\pi_{*}v,\pi_{*}(Jv)) \\
	& = & (a-b)\omega_0(\tilde{p}_{*}v, J_{\textrm{st}}(\tilde{p}_{*}v))+a\omega_{\textrm{FS}}(\pi_{*}v,J_{\textrm{st}}(\pi_{*}v)) \\
	& > & 0,
\end{eqnarray*}
where the third equality follows from the $(J,J_{\textrm{st}})$-holomorphicity of $\pi$ and $\tilde{p}$ and the last inequality follows from the $\omega_{\textrm{FS}}$- and $\omega_{0}$-tameness of $J_{\textrm{st}}$. 
This also shows that $J$ is $\widetilde{\omega}_{a,b}$-tame. 
Finally, we check the $J$-invariance of $\widetilde{\omega}_{a,b}$. 
The fact that $J_{\textrm{st}}$'s on $\CP^1$ and $\CP^2$ are compatible with $\omega_{0}$ and $\omega_{\textrm{FS}}$ proves that 
\begin{eqnarray*}
	\widetilde{\omega}_{a,b}(Ju, Jv) & = & (a-b)\tilde{p}^{*}\omega_0(Ju,Jv)+a\pi^{*}\omega_{\textrm{FS}}(Ju,Jv) \\ 
	& = &  (a-b)\omega_0(J_{\textrm{st}} (\tilde{p}_*u), J_{\textrm{st}}(\tilde{p}_*v))+a\omega_{\textrm{FS}}(J_{\textrm{st}}(\pi_*u),J_{\textrm{st}}(\pi_*v)) \\
	& = &  (a-b)\omega_0(\tilde{p}_*u, \tilde{p}_*v)+a\omega_{\textrm{FS}}(\pi_*u, \pi_*v) \\
	& = &  \widetilde{\omega}_{a,b}(u,v),
\end{eqnarray*}	
which completes the proof.
\end{proof}

We would like to explain the reason we have defined $\widetilde{\omega}_{a,b}$ as in (\ref{eqn: omega_tilde}), not as $a\pi^{*}\omega_{\textrm{FS}}+b\tilde{p}^{*}\omega_0$ for example. 
Observe that the symplectic form $\omega_{a,b}$ on $S^2 \times S^2$ evaluates to $a$ on the homology class $[S^2 \times \{\textrm{pt}\}]$ and to $b$ on the homology class $[\{\textrm{pt}\} \times S^2]$. 
Those homology classes can be seen as a section class and a fiber class of the $S^2$-bundle $S^2 \times S^2 \rightarrow S^2$, respectively. 
Similar evaluations hold for $\widetilde{\omega}_{a,b}$: 
Let $H$ be the proper transform of a generic hyperplane in $\CP^2$ not containing $x_0$, 
and also let $E$ be the exceptional sphere, i.e., $E=\pi^{-1}(x_0)$. 
Then $[H]$ and $[H]-[E]$ give rise to a $(+1)$-section class and a fiber class of the bundle $\tilde{p} \colon \CP^2 \# \overline{\CP}^2 \rightarrow \CP^1$, respectively. 
It follows from a straightforward calculation that 
$$
	\widetilde{\omega}_{a,b}([H])=a, \quad \widetilde{\omega}_{a,b}([H]-[E])=b.
$$

\subsubsection{The first Chern classes of the tangent bundles}

For future use, here we compute the first Chern classes $c_{1}(T(S^2 \times S^2))$ and $c_{1}(T(S^2 \widetilde{\times} S^2))$. 
Before the computation, recall the \textit{adjunction formula}: 
Let $X$ be a smooth complex surface and $C$ a smooth complex curve on $X$. 
Then, we have the following formula: 
\begin{equation}\label{eqn: adjuction}
	c_{1}(TX)([C]) = c_{1}(TC)([C])+c_{1}(N_{C/X})([C])=\chi(C)+[C]^{2},
\end{equation}
where $\chi(C)$ denotes the Euler characteristic of $C$. 

\begin{lemma}\label{lem: Chern class}
Let $T(S^2 \times S^2)$ (resp. $T(S^2 \widetilde{\times} S^2)$) be the tangent bundle of $S^2 \times S^2$ (resp. $S^2 \widetilde{\times} S^2$) with the standard complex structure. 
Then the following holds. 
\begin{enumerate}
\item\label{c_1 of S2xS2} The first Chern class of $T(S^2 \times S^2)$ is given by
$$
	c_{1}(T(S^2 \times S^2))=2\PD[S^2 \times \{ \mathrm{pt}\}]+2\PD[\{ \mathrm{pt}\} \times S^2].
$$
\item\label{c_1 of S2tilde{x}S2} The first Chern class of $T(S^2 \widetilde{\times} S^2)$ is given by
$$
	c_{1}(T(S^2 \widetilde{\times} S^2))=3\PD[H]-\PD[E].
$$
\end{enumerate}
\end{lemma}

\begin{proof}
We prove (\ref{c_1 of S2xS2}). 
For simplicity, let $A=[S^2 \times \{ \mathrm{pt}\}]$ and $B=[\{ \mathrm{pt}\} \times S^2]$.
Suppose that $c_{1}(T(S^2 \times S^2))= p\PD (A)+ q \PD (B)$ for some $p,q \in \Z$. 
We have 
$$
	c_{1}(T(S^2 \times S^2))(A) = q \ \ \textrm{and} \ \ c_{1}(T(S^2 \times S^2))(B) = p.
$$
By the adjunction formula (\ref{eqn: adjuction}), 
$$
	q=c_{1}(T(S^2 \times S^2))(A) = 2+0 = 2 \ \ \textrm{and} \ \ p=c_{1}(T(S^2 \times S^2))(B)=2+0 =2. 
$$
Thus, we have $c_{1}(T(S^2 \times S^2))=2\PD(A)+2\PD(B)$. 

We prove (\ref{c_1 of S2tilde{x}S2}). 
Suppose that $c_{1}(T(S^2 \widetilde{\times} S^2))= r\PD[E]+ s \PD[H]$ for some $r,s \in \Z$. 
Then, by the fact that $[H]^2=1$, $[E]^2=-1$ and $[E] \cdot [H]=0$, we have 
$$
	c_{1}(T(S^2 \widetilde{\times} S^2))([H]) = s \ \ \textrm{and} \ \ c_{1}(T(S^2 \widetilde{\times} S^2))([E]) = -r.
$$
As $H$ and $E$ are complex curves on $S^2 \widetilde{\times} S^2 (\approx \CP^2 \# \overline{\CP}^2)$, the formula (\ref{eqn: adjuction}) shows that 
$$
	s=c_{1}(T(S^2 \widetilde{\times} S^2))([H]) = 2+1=3 \ \ \textrm{and} \ \ -r=c_{1}(T(S^2 \widetilde{\times} S^2))([E])=2+(-1)=1. 
$$
Hence, we obtain $c_{1}(T(S^2 \widetilde{\times} S^2))=3\PD[H]-\PD[E]$. 
\end{proof}

\subsection{Embeddings of rational ruled surfaces}\label{section: embedding}

In this subsection, we will see that $(S^{2} \times S^{2}, \omega_{a,1})$ and 
$(S^{2} \widetilde{\times} S^{2}, \widetilde{\omega}_{a,1})$ can be embedded into 
closed symplectic manifolds as symplectic hyperplane sections when $a \geq 1$ for the former and $a \geq 2$ for the latter. 
Our embeddings will derive from complex geometry. 
Notice that according to Convention \ref{convention}, 
symplectic hyperplane sections are assumed to be of degree $1$ unless otherwise stated. 

%

\subsubsection{Case of $\omega_{1,1}$} First, let us consider the symplectic manifold $(S^{2} \times S^{2}, \omega_{1,1})$. 
Let $\omega_{\text{FS}}$ be the Fubini--Study form on $\CP^{4}$. 
Consider the projective hypersurface $M \subset \CP^{4}$ defined by 
$$
	M \coloneqq 
	\{ (z_{0}: z_{1}: z_{2}: z_{3}: z_{4})\in \CP^{4} \mid z_{0}^{2}+z_{1}^{2}+z_{2}^{2}+z_{3}^{2}+z_{4}^{2}=0 \},
$$
and we set $\Sigma = M \cap \{z_{4}=0 \}$.
Let $\omega$ be the symplectic form on $M$ defined to be the restriction of $\omega_{\text{FS}}$ to $M$. 
Let $\iota \colon \CP^1 \times \CP^1 \rightarrow M$ be the embedding given by 
$$
	([x:y], [z:w]) \mapsto [xz+ixw: i(xz+ixw): yz+iyw: i(yz+iyw):0] 
$$
whose image coincides with $\Sigma$. 
The cohomology class $[\iota^{*} \omega]$ evaluates to $1$ on the both of $[\CP^1 \times \{ \text{pt} \}]$ and $[\{\text{pt}\} \times \CP^1]$, which implies that 
identifying $\CP^1$ with $S^2$, $[\iota^{*}\omega]$ is cohomologous to $[\omega_{1,1}]$. 
Hence in view of \cite[Theorem 1.1]{LMc}, we conclude that $(S^2 \times S^2, \iota^{*}\omega)$ is 
symplectomorphic to $(S^{2} \times S^{2}, \omega_{1,1})$.

\begin{proposition}\label{prop: quadric}
The symplectic manifold $(\Sigma, \iota^{*}\omega)$ is a symplectic hyperplane section
on $(M, \omega)$. 
In particular, $(S^{2} \times S^{2}, \omega_{1,1})$ is embedded into $(M,\omega)$ 
as a symplectic hyperplane section. 
\end{proposition}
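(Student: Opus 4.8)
The plan is to verify directly the three conditions in the definition of a degree-$1$ symplectic divisor: that $\Sigma$ is a symplectic submanifold of $M$ of codimension $2$, that its Poincar\'{e} dual equals $[\omega]$, and that the complement $M \setminus \Sigma$ carries a Stein structure. First I would record that $M$ is a smooth quadric threefold, since the gradient of $z_{0}^{2}+\dots+z_{4}^{2}$ vanishes only at the origin, and that the hyperplane $H = \{z_{4}=0\}$ meets $M$ transversally: along $\Sigma$ one has $z_{4}=0$ but $(z_{0},\dots,z_{3})\neq 0$, so the differentials of $\sum_i z_{i}^{2}$ and of $z_{4}$ are never proportional there. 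Hence $\Sigma = M \cap H$ is a smooth complex hypersurface of $M$, and since $\omega$ is the restriction of the K\"{a}hler form $\omega_{\mathrm{FS}}$, the further restriction $\iota^{*}\omega$ is again symplectic; this provides the required symplectic submanifold of codimension $2$.

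For the homological condition I would exploit that $\Sigma$ is a hyperplane section of $M$. Writing $\iota_{M}\colon M \hookrightarrow \CP^{4}$, the Poincar\'{e} dual of $[H]$ in $\CP^{4}$ is the hyperplane class $[\omega_{\mathrm{FS}}]$, and by naturality of the Poincar\'{e} dual under the transverse intersection $\Sigma = M \cap H$ one obtains $\mathrm{PD}_{M}[\Sigma] = \iota_{M}^{*}[\omega_{\mathrm{FS}}] = [\omega]$. Since $[\omega]$ generates $H^{2}(M;\Z)\cong\Z$, this says exactly that $\Sigma$ is a divisor of degree $1$. Equivalently, $\Sigma$ is the zero locus of a section of $\O_{M}(1)$, whose first Chern class is $[\omega]$.

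The crux is the Stein condition on the complement, and this is where I expect the genuine content to lie. Passing to the affine chart $\{z_{4}\neq 0\}\cong \C^{4}$ with coordinates $w_{i}=z_{i}/z_{4}$, one computes
\begin{equation*}
M \setminus \Sigma \;=\; M \cap \{z_{4}\neq 0\} \;=\; \{\, w_{0}^{2}+w_{1}^{2}+w_{2}^{2}+w_{3}^{2}+1 = 0 \,\} \subset \C^{4},
\end{equation*}
the smooth affine quadric. Because $M$ is smooth, this is a smooth closed complex submanifold of $\C^{4}$, i.e.\ a smooth affine variety, and every smooth affine variety is Stein since a closed complex submanifold of $\C^{N}$ is Stein. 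This exhibits a Stein structure on $M \setminus \Sigma$ and completes the verification. The only delicate points are the transversality check (needed both for smoothness of the complement and for identifying the intersection class with the restricted hyperplane class) and the appeal to the fact that smooth affine varieties are Stein; the rest is routine. Combined with the symplectomorphism $(\Sigma,\iota^{*}\omega)\cong(S^{2}\times S^{2},\omega_{1,1})$ noted before the statement, this shows that $(S^{2}\times S^{2},\omega_{1,1})$ embeds into $(M,\omega)$ as a symplectic divisor.
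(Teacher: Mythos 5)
Your proof is correct and follows essentially the same route as the paper: verify that $\Sigma$ is a symplectic submanifold, that it is a degree-$1$ hyperplane section so $\mathrm{PD}[\Sigma]=[\omega]$, and that the complement is Stein. The paper simply invokes the general fact that the complement of an ample divisor is Stein, whereas you make this concrete by exhibiting $M\setminus\Sigma$ as the smooth affine quadric in $\C^{4}$; this is the same argument in explicit form (and the explicit affine model is in fact used later in the paper, in the proof of Theorem \ref{thm: fillability}).
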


\begin{proof}
By definition, $(\Sigma, \iota^{*}\omega)$ is a symplectic submanifold of $(M, \omega)$. 
Since $\Sigma$ is a hyperplane section of $M$, we have $[\omega]=\textrm{PD}[\Sigma]$. 
Furthermore, since $\Sigma$ is an ample divisor on $(M, \omega)$, it follows that 
the complement $M \setminus \Sigma$ is a Stein manifold. 
This completes the proof. 
\end{proof}

\subsubsection{Case of $\omega_{a,1}$ and $\widetilde{\omega}_{a,1}$ with $a \geq 2$}
In this case, we embed the rational ruled surfaces into certain projective bundles. 
The following construction of embeddings is due to Biran and Cieliebak \cite[Section 2.4]{BC}.
Set $F_{m} \coloneqq \O(-m) \oplus \O \oplus \O \rightarrow \CP^{1}$, 
where $m \in \Z_{ \geq 0}$.  
Let $\P(F_{m})$ denote the projectivization of the vector bundle $F_{m}$ and 
$\pi \colon \P(F_{m}) \rightarrow \CP^{1}$ denote the bundle projection. 
Take two holomorphic sections $\sigma_{i}$ ($i=1,2$) of the hyperplane line bundle 
$\O_{\P(F_{m})}(1) \rightarrow \P(F_{m})$ induced by the projection 
$F_{m} \rightarrow \O$ from $F_{m}$ to the $i$-th trivial summand. 
We also take holomorphic sections $s_{1}$ and $s_{2}$ of the line bundle 
$\O(n) \rightarrow \CP^{1}$ ($n>0$) such that 
$s_{1}$ is transverse to the zero-section, and $s_{2}$ is transverse to the intersection of 
the zero-section and the zero set $Z(s_{1})$ of $s_{1}$, where we identify the base space $\CP^{1}$ with 
the zero-section.  
Define the smooth hypersurface $\Sigma_{m,n}$ of $\P(F_{m})$ by 
\begin{equation}\label{def: Sigma}
	\Sigma_{m,n}= Z(\sigma_{1} \otimes \pi^{*}s_{1}+\epsilon \sigma_{2}\otimes \pi^{*}s_{2}) 
\end{equation}
for a small $\epsilon >0$.

\begin{example}
We demonstrate the above construction, taking $F_{0}=\O \oplus \O \oplus \O$ as the simplest example. 
For $F_0$, the projective bundle $\P(F_{0}) \rightarrow \CP^1$ is nothing but the trivial bundle $\CP^1 \times \CP^2 \rightarrow \CP^1$. 
Letting $((z_0:z_1), (w_0:w_1:w_2))$ be homogeneous coordinates of $\CP^1 \times \CP^2$, we can regard the sections $\sigma_{1}$ and $\sigma_{2}$ of $\O_{\CP^1 \times \CP^2}(1) \rightarrow \CP^1 \times \CP^2$ as polynomials $w_1$ and $w_2$, respectively. 
Now choose sections $s_1, s_2 \colon \CP^1 \rightarrow \O(n)$ to be $z_0^n$ and $z_1^n$. 
Then by (\ref{def: Sigma}), we have 
$$
	\Sigma_{0,n} =\{((z_0:z_1), (w_0:w_1:w_2)) \in \CP^1 \times \CP^2 \mid w_1z_0^n+\epsilon w_2z_1^n=0\},
$$
which is the well-known description of rational ruled surfaces embedded in $\CP^1 \times \CP^2$ given by Hirzebruch. 
\end{example}

\begin{lemma}\label{lem: hypersurface}
The hypersurface $\Sigma_{m,n}$ is diffeomorphic to 
$S^{2} \times S^{2}$ if $m-n$ is even; otherwise, it is diffeomorphic to $S^{2} \widetilde{\times} S^{2}$. 
\end{lemma}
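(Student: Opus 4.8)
The plan is to identify the diffeomorphism type of $\Sigma_{m,n}$ by exhibiting it as an $S^2$-bundle over $S^2$ and computing the parity of the relevant integer invariant, following the same strategy used earlier for $\P(E_m)$. First I would observe that the composition $\pi|_{\Sigma_{m,n}} \colon \Sigma_{m,n} \to \CP^1$ of the inclusion with the bundle projection $\pi \colon \P(F_m) \to \CP^1$ gives $\Sigma_{m,n}$ the structure of a smooth fiber bundle over $\CP^1 \cong S^2$ with fibers $S^2$. To see this, I would examine the defining equation \eqref{def: Sigma} fiberwise: over a point $x \in \CP^1$, the fiber of $\P(F_m)$ is $\CP^2$, and the section $\sigma_1 \otimes \pi^* s_1 + \epsilon\, \sigma_2 \otimes \pi^* s_2$ restricts to a linear form on $\CP^2$, so its zero locus is generically a line $\CP^1$. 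The transversality hypotheses on $s_1$ and $s_2$ are designed precisely to guarantee that $\Sigma_{m,n}$ stays smooth and the fiber remains a $\CP^1$ over every point of the base, including where $s_1$ vanishes (there the term $\epsilon\, \sigma_2 \otimes \pi^* s_2$ takes over), so that $\pi|_{\Sigma_{m,n}}$ is a genuine $S^2$-bundle over $S^2$ with no singular or degenerate fibers.

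Having established that $\Sigma_{m,n}$ is an $S^2$-bundle over $S^2$, the key point is that such a bundle is determined up to diffeomorphism by a single $\Z/2$ invariant, as recalled in Section \ref{sec: constratsurf} (e.g. \cite[Theorem 3.4.8]{GS}); concretely, it is the trivial bundle $S^2 \times S^2$ or the nontrivial bundle $S^2 \widetilde{\times} S^2$ according to the parity of an integer measuring the twisting. The natural way to extract this parity is to compute a characteristic number of $\Sigma_{m,n}$, or equivalently to describe $\Sigma_{m,n}$ as a projectivization $\P(E_k) \to \CP^1$ for some $k$ and then appeal to the parity of $k$. I would therefore compute the normal bundle of a distinguished section of $\pi|_{\Sigma_{m,n}}$, or compute the self-intersection of a section curve inside $\Sigma_{m,n}$, to read off the relevant degree $k$ in terms of $m$ and $n$. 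The bookkeeping here involves the classes $\alpha = \pi^* c_1(\O(1))$ and $\beta = c_1(\O_{\P(F_m)}(1))$, together with the degrees $m$ (the twisting of $F_m$) and $n$ (the degree of the sections $s_1, s_2$), and I expect the twisting integer $k$ to come out congruent to $m - n$ modulo $2$.

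The main obstacle will be the precise identification of the twisting invariant, i.e.\ verifying rigorously that the mod-$2$ invariant of $\Sigma_{m,n}$ equals the parity of $m - n$ rather than, say, $m$ or $n$ alone. The subtlety is that the $\epsilon\, \sigma_2 \otimes \pi^* s_2$ term, which is what makes the total space a genuine bundle rather than a reducible or singular hypersurface, perturbs the naive picture precisely over the zero set $Z(s_1)$, and one must track how this affects the clutching data of the resulting $S^2$-bundle. I would handle this by choosing an explicit section of $\pi|_{\Sigma_{m,n}}$ coming from one of the projective subbundles of $\P(F_m)$ and computing its self-intersection number in $\Sigma_{m,n}$ via the adjunction/normal-bundle formula, using the intersection numbers for $\alpha$ and $\beta$ analogous to \eqref{eqn: intersection 1} and \eqref{eqn: intersection 2}. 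Once the self-intersection is computed as a function of $m$ and $n$, its parity determines the diffeomorphism type: the bundle is trivial exactly when this self-intersection is even, which I anticipate occurring precisely when $m - n$ is even, yielding the stated dichotomy.
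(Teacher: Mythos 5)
Your proposal follows essentially the same route as the paper: the paper observes that $\Sigma_{m,n}$ is a rational ruled surface, picks the section $D_{\infty}=\P(\O(-m)\oplus 0\oplus 0)\subset Z(\sigma_{1})\cap Z(\sigma_{2})\subset\Sigma_{m,n}$, and computes $[D_{\infty}]^{2}=c_{1}(N_{D_{\infty}/\P(F_{m})})([D_{\infty}])-c_{1}(N_{\Sigma_{m,n}/\P(F_{m})}|_{D_{\infty}})([D_{\infty}])=2m-(n+m)=m-n$ via adjunction and the classes $\alpha',\beta'$, exactly the normal-bundle computation you outline. The parity you anticipate, $m-n$, is precisely what the paper obtains.
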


\begin{proof}
Since $\pi$ gives a $\CP^1$-bundle structure to $\Sigma_{m,n}$, it is a rational ruled surface.
Its diffeomorphism type is determined by the parity of the self-intersection number of a holomorphic section of the fibration $\Sigma_{m,n} \rightarrow \CP^{1}$ (e.g. see \cite[Theorem 3.4.8]{GS}). 
Hence it suffices to compute the one of 
$D_{\infty} \coloneqq \P(\O(-m) \oplus 0 \oplus 0)$ in $\Sigma_{m,n}$; this agrees with $c_{1}(N_{D_{\infty}/\Sigma_{m,n}})([D_{\infty}])$. 
The splitting $N_{D_{\infty}/ \P(F_{m})} \cong N_{D_{\infty}/\Sigma_{m,n}} \oplus N_{\Sigma_{m,n}/\P(F_{m})}|_{D_{\infty}}$ 
yields 
\begin{equation}
	c_{1}(N_{D_{\infty}/\Sigma_{m,n}})=c_{1}(N_{D_{\infty}/\P(F_{m})})-c_{1}(N_{\Sigma_{m,n}/\P(F_{m})}|_{D_{\infty}}).
\end{equation}
Thus, we shall focus on 
the right-hand side in the rest of the proof. 

Before the computation, 
we introduce some notations. 
Let $F$ be a general line in a fiber of $\P(F_{m})$ and 
$D_{0}$ the curve $\P(0 \oplus 0 \oplus \O)$.  
Set $\alpha = \pi^{*} c_{1}(\O(1))$ and $\beta =c_{1}(\O_{\P(F_{m})}(1)) \in H^{2}(\P(F_{m}); \Z)$. 
Then we have
\begin{eqnarray*}
	\alpha([F])=0, & \alpha([D_{0}])=1, &   \alpha([D_{\infty}])=1, \\
	\beta([F])=1,  & \beta([D_{0}])=0,  &  \beta([D_{\infty}])=m. 
\end{eqnarray*}
Here, the first three equalities follow from the fact that $\pi|_{F}$ is constant and $\pi|_{D_i}$ ($i=0,\infty$) are biholomorphisms, and the next three equalities follow from 
$$
	\O_{\P(F_m)}|_{F} \cong \O_{\CP^1}(1), \quad \O_{\P(F_m)}|_{D_0} \cong \O, \quad \O_{\P(F_m)}|_{D_{\infty}} \cong \O(m).
$$
Those evaluations lead to $[D_{\infty}]=[D_{0}]+m[F]$.

Let us compute $c_{1}(N_{D_{\infty}/\P(F_{m})})([D_{\infty}])$. 
By definition, 
$$
Z(\sigma_{1})=\P(\O(-m) \oplus 0 \oplus \O) \textrm{ and } Z(\sigma_{1}) \cap Z(\sigma_{2})=D_{\infty}, 
$$
and hence coupling the adjunction formula with the fact that $\O_{\P(F_{m})}(1)|_{D_{\infty}} \cong \pi^{*}\O(m)|_{D_{\infty}}$, we have  
\begin{eqnarray*}
N_{D_{\infty}/ \P(F_{m})} & \cong & 
N_{D_{\infty}/\P(\O(-m) \oplus 0 \oplus \O)} \oplus N_{\P(\O(-m) \oplus 0 \oplus \O)/ \P(F_{m})}|_{D_{\infty}} \\
& \cong & 
\O_{\P(\O(-m) \oplus 0 \oplus \O)}(1)|_{D_{\infty}} \oplus \O_{\P(F_{m})}(1)|_{D_{\infty}} \\ 
& \cong & 
(\pi^{*}\O(m) \oplus \pi^{*}\O(m))|_{D_{\infty}}.  
\end{eqnarray*}
Thus, 
\begin{equation}\label{eqn: normal 1}
	c_{1}(N_{D_{\infty}/\P(F_{m})})([D_{\infty}])=2m. 
\end{equation}

Next, compute $c_{1}(N_{\Sigma_{m,n}/\P(F_{m})}|_{D_{\infty}})([D_{\infty}])$. 
By the adjunction formula, we have 
$$N_{\Sigma_{m,n}/\P(F_{m})} \cong (\O_{\P(F_{m})}(1) \otimes \pi^{*} \O(n))|_{\Sigma_{m,n}}.$$ 
Thus, $c_{1}(N_{\Sigma_{m,n}/\P(F_{m})})=\iota^{*}(n\alpha+\beta')$ and 
\begin{equation}\label{eqn: normal 2}
	c_{1}(N_{\Sigma_{m,n}/\P(F_{m})}|_{D_{\infty}})([D_{\infty}])=n+m,  
\end{equation}
where $\iota \colon \Sigma_{n,m} \hookrightarrow \P(E_{m})$ denotes the inclusion.
Combining (\ref{eqn: normal 1}) with (\ref{eqn: normal 2}), we conclude that 
$$
	c_{1}(N_{D_{\infty}/ \P(F_{m})})[D_{\infty}]=2m-(n+m)=m-n. 
$$
Since $[F]^{2}=0$, the parity of $[D_{\infty}]^{2}=m-n$ determines 
the diffeomorphism type of $\Sigma_{m,n}$. 
\end{proof}

Now we discuss symplectic aspects of $\P(F_{m})$ and $\Sigma_{m,n}$. 
Thanks to semi-positivity of $\O(m)$ and $\O(n)$, 
the bundle $\O_{\P(F_{m})}(1) \otimes \pi^{*}\O(n) \rightarrow \P(F_{m})$ is positive, and 
$\P(F_{m})$ admits a K\"{a}hler form $\Omega_{m}$ with 
the cohomology class $[\Omega_{m}]$ equal to $n\alpha+\beta$ (see \cite[Section 2.4]{BC}); 
of course $\Omega_{m}$ depends on $n$ 
although we omit this from the notation for the sake of simplicity. 
Let $\eta_{m,n}$ denote the restriction of $\Omega_{m}$ to $\Sigma_{m,n}$.

\begin{lemma}\label{lemma: ruled}
The symplectic manifold $(\Sigma_{m,n}, \eta_{m,n})$ is symplectomorphic to one of the following: 
\begin{enumerate}
\item $(S^{2}\times S^{2}, \omega_{2m-3\ell, 1})$ if $m-n=2\ell$ is even; 
\item $(S^{2} \widetilde{\times} S^{2}, \widetilde{\omega}_{2m-3\ell-1,1})$ if $m-n=2\ell+1$ is odd.
\end{enumerate}
\end{lemma}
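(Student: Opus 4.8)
The plan is to run exactly the strategy used for Lemma~\ref{lem: symplectomorphism type}: by \cite[Theorem 1.1]{LMc} the symplectomorphism type of a symplectic rational ruled surface is pinned down by its cohomology class, and once the diffeomorphism type has been fixed (which Lemma~\ref{lem: hypersurface} does, via the parity of $m-n$), this reduces to computing the symplectic areas of a fiber of the ruling $\Sigma_{m,n} \to \CP^{1}$ and of a section of the normalized self-intersection appearing in \eqref{eqn: omega} and \eqref{eqn: omega_tilde} (namely $0$ in the even case and $1$ in the odd case). So I would read these two numbers off from $[\eta_{m,n}] = \iota^{*}(n\alpha' + \beta')$ and match them against the defining equations.

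First I would identify the two relevant homology classes in $\Sigma_{m,n}$. Since $\Sigma_{m,n}$ is cut out by a section of $\O_{\P(F_{m})}(1) \otimes \pi^{*}\O(n)$, which restricts to $\O(1)$ on each fiber $\CP^{2}$ of $\P(F_{m})$, a fiber $f$ of the ruling $\Sigma_{m,n} \to \CP^{1}$ is a line in that $\CP^{2}$, so that $\iota_{*}[f] = [F']$. The section $D_{\infty} = \P(\O(-m) \oplus 0 \oplus 0)$ already used in Lemma~\ref{lem: hypersurface} lies in $\Sigma_{m,n}$, satisfies $\iota_{*}[D_{\infty}] = [D_{\infty}] = [D_{0}] + m[F']$, meets $f$ once, and has self-intersection $[D_{\infty}]^{2} = m-n$ inside $\Sigma_{m,n}$ (this is precisely the computation in the proof of Lemma~\ref{lem: hypersurface}).

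Next I would compute the areas from the intersection numbers recorded there. Using $\alpha'([F']) = 0$, $\beta'([F']) = 1$ gives $\eta_{m,n}([f]) = 1$, and using $\alpha'([D_{\infty}]) = 1$, $\beta'([D_{\infty}]) = m$ gives $\eta_{m,n}([D_{\infty}]) = n+m$. Since the target normalizations in \eqref{eqn: omega} and \eqref{eqn: omega_tilde} use a section of self-intersection $0$ (resp.\ $1$), I would correct $D_{\infty}$ by a multiple of $f$. In the even case write $m-n = 2\ell$: then $[D_{\infty}] - \ell[f]$ has self-intersection $(m-n) - 2\ell = 0$ and area $(n+m) - \ell = 2m - 3\ell$. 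In the odd case write $m-n = 2\ell+1$: then $[D_{\infty}] - \ell[f]$ has self-intersection $(m-n) - 2\ell = 1$ and area $(n+m) - \ell = 2m - 3\ell - 1$. Together with $\eta_{m,n}([f]) = 1$ this yields $(S^{2}\times S^{2}, \omega_{2m-3\ell,1})$ and $(S^{2}\widetilde{\times}S^{2}, \widetilde{\omega}_{2m-3\ell-1,1})$ respectively, via \cite[Theorem 1.1]{LMc}.

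The only genuinely non-formal point, and the step I would be most careful about, is the identification $\iota_{*}[f] = [F']$, i.e.\ that the ruling fibers of $\Sigma_{m,n}$ are honest lines in the $\CP^{2}$-fibers of $\P(F_{m})$: this is what forces $\eta_{m,n}([f]) = 1$ and hence the $b=1$ normalization, and it is exactly what ties this construction to ruled surfaces with a fiber of area $1$. Everything else is the bookkeeping of passing from a section of arbitrary self-intersection to the standard generators, together with the appeal to \cite[Theorem 1.1]{LMc} that upgrades the matching of these cohomological data to an actual symplectomorphism.
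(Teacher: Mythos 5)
Your proposal is correct and follows essentially the same route as the paper's proof: both invoke \cite[Theorem 1.1]{LMc} together with Lemma \ref{lem: hypersurface}, compute $\eta_{m,n}([F'])=1$ and $\eta_{m,n}([D_{\infty}])=m+n$ from the intersection data, and pass to the corrected section class $[D_{\infty}]-\ell[F']$ to match the normalizations in \eqref{eqn: omega} and \eqref{eqn: omega_tilde}. The only difference is that you explicitly justify $\iota_{*}[f]=[F']$ via the restriction of $\O_{\P(F_{m})}(1)\otimes\pi^{*}\O(n)$ to the $\CP^{2}$-fibers, a point the paper leaves implicit.
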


\begin{proof}
The following argument is similar to \cite[Proposition 2.2]{Bor}. 
As we have already seen, $\Sigma_{m,n}$ is a rational ruled surface. 
Hence, by a result in \cite[Theorem 1.1]{LMc} combined with Lemma \ref{lem: hypersurface}, 
the symplectomorphism type of $(\Sigma_{m,n}, \eta_{m,n})$ is determined by the parity of $m-n$ and 
the cohomology class of $[\eta_{m,n}] \in H^{2}(\Sigma_{m,n}; \Z)$. 
Recall from the proof of Lemma \ref{lem: hypersurface} that 
the homology classes $[D_{\infty}]$ and $[F]$ form a basis for $H_{2}(\Sigma_{m,n}; \Z)$, and 
their intersections are given by
$[D_{\infty}]^2=m-n, [F]^2=0$ and $[D_{\infty}] \cdot [F]=1$. 
We also have $\eta_{m,n}([D_{\infty}])=m+n $ and $\eta_{m,n}([F])=1$.

When $m-n=2\ell$ is even,
$(\Sigma_{m,n}, \eta_{m,n})$ is symplectomorphic to $(S^{2} \times S^{2}, \omega_{a,b})$ for some $a$ and $b$. 
The homology classes of the spheres of the latter space are identified with 
$$
	[S^{2} \times \{\text{pt}\}]= [D_{\infty}]-\ell[F] \ \ \textrm{ and  }\ \ 
	[\{\text{pt}\} \times S^{2}]= [F].
$$
This shows that $\omega_{a,b}([S^{2} \times \{\text{pt}\}])=2m-3\ell$ and 
$\omega_{a,b}([\{\text{pt}\} \times S^{2}])=1$. 
This proves the first assertion.

When $m-n=2\ell+1$ is odd,  
$(\Sigma_{m,n}, \eta_{m,n})$ is symplectomorphic to  $(S^{2} \widetilde{\times} S^{2}, \widetilde{\omega}_{a,b})$ for some $a$ and $b$. 
The homology classes $[H]$ and $[H]-[E]$ defined after Lemma \ref{lemma: Kahler}
are identified with 
$$
	[H]= [D_{\infty}]-\ell[F] \ \ \textrm{ and  }\ \ 
	[H]-[E]= [F], 
$$
and this shows that $\omega_{a,b}([H])=2m-3\ell-1$ and $\omega_{a,b}([H]-[E])=1$. 
This proves the second assertion.
\end{proof}


\begin{proof}[Proof of Theorem \ref{thm: embeddability} (\ref{item: embedding})]

In view of Proposition \ref{prop: quadric}, it suffices to deal with the case when $a \geq 2$.
It follows from Lemma \ref{lemma: ruled} that 
for any $a \geq 2$, there exist $m$ and $n$ such that 
$(S^{2} \times S^{2}, \omega_{a,1})$ is symplectomorphic to 
$(\Sigma_{m,n}, \eta_{m,n})$; this also holds for 
$(S^{2} \widetilde{\times} S^{2}, \widetilde{\omega}_{a,1})$. 
Now, suppose that $(\Sigma_{m,n}, \eta_{m,n})$ is symplectomorphic to 
either 
$(S^{2} \times S^{2}, \omega_{a,1})$ or $(S^{2} \widetilde{\times} S^{2}, \widetilde{\omega}_{a,1})$. 
The hypersurface $\Sigma_{m,n}$ is the smooth ample divisor on $\P(F_{m})$ given by 
$$
	Z(\sigma_{1} \otimes \pi^{*}s_{1}+\epsilon\sigma_{2}\otimes \pi^{*}s_{2}), 
$$
and the K\"{a}hler form $\Omega_{m}$ satisfies 
$[\Omega_{m}]=c_{1}(\O_{\P(F_{m})}(1) \otimes \pi^{*}\O(n))$. This shows that 
$\Sigma_{m,n}$ is a symplectic hyperplane section  on $(\P(F_{m}), \Omega_{m})$. 
\end{proof}

\begin{remark}
One could construct projective bundles $\P(F_m)$ and $\Sigma_{m,n}$ purely symplectically. 
The reason nevertheless we stick to work in the complex category is to prove that the complements $\P(F_m) \setminus \nu(\Sigma_{m,n})$ are subcritically Stein domains, applying a result of Biran and Cieliebak; see Section \ref{section: fillable}.
\end{remark}

\subsection{Non-embeddability of rational ruled surfaces}\label{section: non-embeddability}


\subsubsection{Almost complex structures on symplectic manifolds}
For non-embeddability results, 
we first define the set of almost complex structures on a symplectic manifold satisfying certain conditions.


Let $(M, \omega)$ be a symplectic manifold and $A \in H_{2}(M; \Z)$ a spherical class. 
Given a decomposition $A=A_{1}+ \cdots + A_{N}$ with $N \geq 1$ consisting of non-trivial spherical classes $A_{i}$'s,
we define the set $\mathcal{J}_{\mathrm{reg}}(\{ A_{i}\})$ of $\omega$-compatible almost complex structures $J$ on $M$ for which every simple $J$-holomorphic sphere representing a factor of $A$ is Fredholm regular. 
Considering all possible decompositions $\{ A_{i}\}$ of $A$, set 
$$ 
	{\mathcal{J}}_{\mathrm{reg}}(A) \coloneqq \bigcap_{\{A_{i}\}}\mathcal{J}_{\mathrm{reg}}(\{ A_{i}\}).
$$
It is known that $\mathcal{J}_{\mathrm{reg}}(A)$ is residual in the space of $\omega$-compatible almost complex structures on $M$ 
with respect to the $C^{\infty}$-topology 
(cf. \cite[Section 6.2]{MSbook}).

%

\subsubsection{Proof of non-embeddability}

Let $(S^{2}\times S^{2}, \omega_{a,2})$ and 
$(S^{2} \widetilde{\times} S^{2}, \widetilde{\omega}_{a,2})$ be the trivial and nontrivial $S^2$-bundles over $S^2$ endowed with the symplectic structures defined by (\ref{eqn: omega}) and (\ref{eqn: omega_tilde}), respectively. 
Let $F$ denote a fiber of the second projection $\mathrm{pr}_2 \colon S^2 \times S^2 \rightarrow S^2$ for $S^{2}\times S^{2}$ and to be one of the fibration $\tilde{p} \colon S^2 \widetilde{\times} S^2 \rightarrow S^2$ given in Section \ref{sec: Symplectic structures on S2tilde{x}S2} for $S^2 \widetilde{\times} S^2$. 
The following is a key lemma for non-embeddability results. 

\begin{lemma}\label{lemma: pi_1}
Let $\Sigma$ be a symplectic hyperplane section on a closed integral symplectic $6$-manifold $(M, \omega)$. 
Suppose that $(\Sigma, \omega|_{ \Sigma})$ is symplectomorphic to either 
$(S^2 \times S^2, \omega_{a,2})$ with $a>2$ or $(S^2 \widetilde{\times} S^2, \widetilde{\omega}_{a,2})$ with $a>3$, and 
the fiber class $B=\iota_{*}[F] \in H_{2}(M; \Z)$ is $J$-indecomposable for $J \in \mathcal{J}_{\mathrm{reg}}(B)$, 
where $\iota$ is a symplectic embedding of $(S^2 \times S^2, \omega_{a,2})$ or $(S^2 \widetilde{\times} S^2, \widetilde{\omega}_{a,2})$ with the image equal to $\Sigma$. 
Then, $\pi_{1}(M \setminus \Sigma)$ is isomorphic to $\Z/a\Z$ for $(S^2 \times S^2, \omega_{a,2})$ and $\Z/(a-2)\Z$ for $(S^2 \widetilde{\times} S^2, \widetilde{\omega}_{a,2})$. 
\end{lemma}

In the lemma, 
a \textit{$J$-indecomposable} homology class $A \in H_{2}(M; \Z)$ is a spherical class 
admitting no decomposition $A=A_{1}+ \cdots +A_{N}$ of $A$ with $N \geq 2$ such that each $A_{i}$ can be represented by a nonconstant 
$J$-holomorphic sphere. 

The following lemma will be used in the proof of Lemma \ref{lemma: pi_1}. 
\begin{lemma}\label{lem: H2isoinclusion}
Let $\iota$ be a symplectic embedding 
as in Lemma \ref{lemma: pi_1}. If $a > 2$, the induced homomorphism $\iota_{*}\colon H_{2}(S^2 \times S^2; \Z) \rightarrow H_{2}(M; \Z)$ is an isomorphism; so is $\iota_{*}\colon H_{2}(S^2 \widetilde{\times} S^2; \Z) \rightarrow H_{2}(M; \Z)$ if $a>3$. 
\end{lemma}

\begin{proof}
We only deal with $(S^2 \times S^2, \omega_{a,2})$ in this proof. 
A similar argument proves the assertion for $(S^2 \widetilde{\times} S^2, \widetilde{\omega}_{a,2})$.

The surjectivity of $\iota_{*}$ directly follows from the long exact sequence in homology for the pair $(M, \iota(S^2 \times S^2))$. Indeed, since $M \setminus \iota(S^2 \times S^2)$ admits a Stein structure, we have $H_{2}(M, \iota(S^2 \times S^2); \Z) \cong H_{2}(W, \del W; \Z) \cong H^{4}(W; \Z)=0$, where $W$ is the complement of a tubular neighborhood of $\iota(S^2 \times S^2)$ in $M$, and the long exact sequence 
$$
	\cdots \rightarrow H_{2}(\iota(S^2 \times S^2); \Z) \xrightarrow{\iota_{*}} H_{2}(M; \Z) \rightarrow H_{2}(M, \iota(S^2 \times S^2); \Z)=0 \rightarrow \cdots
$$
tells us that $\iota_*$ is surjective.
To see injectivity, 
suppose that there exist integers $k_{1}, k_{2}\in \Z$ such that $\iota_{*}(k_{1}A+k_{2}B)=0$, 
where we set $A=[S^2 \times \{\mathrm{pt}\}]$ and $B=[\{\mathrm{pt}\} \times S^2]=[F]$ as in the proof of Lemma \ref{lem: Chern class}. 
By definition of symplectic hyperplane sections, we have 
\begin{equation}\label{eqn: Chern number}
c_{1}(N_{\Sigma/ M})(\iota_{*}A)=a \textrm{\ \ and \ } c_{1}(N_{\Sigma/ M})(\iota_{*}B)=2.
\end{equation}
We obtain 
\begin{equation}\label{eqn: normal}
0=c_{1}(N_{\Sigma/ M})(0)=c_{1}(N_{\Sigma/ M})(\iota_{*}(k_{1}A+k_{2}B))=ak_{1}+2k_{2}.
\end{equation}
Moreover, using Lemma \ref{lem: Chern class}, we have 
$$
	c_{1}(TM)(\iota_{*}A)=c_{1}(N_{\Sigma/ M})(\iota_{*}A)+c_{1}(T(S^2 \times S^2))(A)=a+2,
$$
and 
$$
	c_{1}(TM)(\iota_{*}B)=c_{1}(N_{\Sigma/ M})(\iota_{*}B)+c_{1}(T(S^2\times S^2))(B)=4. 
$$
We deduce
\begin{equation}\label{eqn: tangent}
	0=c_{1}(TM)(0)=c_{1}(TM)(k_{1}\iota_{*}A+k_{2}\iota_{*}B)=(a+2)k_{1}+4k_{2}. 
\end{equation}
Combining the equations (\ref{eqn: normal})  and  (\ref{eqn: tangent}), we conclude that 
$k_{1}=0$ and $k_{2}=0$ under the assumption $a>2$, and this completes the proof.
\end{proof}

Now we prove Lemma \ref{lemma: pi_1}.

\begin{notation}
In the proof below, we will consider fundamental groups of several topological spaces. 
To avoid confusion, let us denote a homotopy class of a loop $f \colon S^1=\R/\Z \rightarrow X$ based at a point $p $
in a topological space $X$  by $[f]_{X}$ or $[f]_{(X,p)}$. 
\end{notation}


\begin{proof}[Proof of Lemma \ref{lemma: pi_1}]
The following proof is in spirit contained in \cite[Theorem 1.1(ii)]{Hind}. 
We first prove the case $(S^2 \times S^2, \omega_{a,2})$, divided into five steps. 
After this, as the other case is similar to the former case, 
we will mainly explain the difference between them and give a sketch of the proof.

\textbf{Case $(S^2 \times S^2, \omega_{a,2})$.}

\textbf{Step1.} We define a moduli space of holomorphic spheres in $M$. 

Since the homology class $[\Sigma]$ is Poincar\'{e} dual to $[\omega_{a,2}]$, the normal bundle $N_{\Sigma/M}$ is diffeomorphic to the line bundle $\O(a,2) \coloneqq \mathrm{pr}_1^{*}\O(a) \otimes \mathrm{pr}_2^{*}\O(2) \rightarrow S^2 \times S^2$. 
Here, $\mathrm{pr}_1$ and $\mathrm{pr}_2$ denote the first and second projections. 
We can therefore choose an $\omega$-compatible almost complex structure $J_{0}$ on $M$ so that 
a tubular neighborhood of $\Sigma$ in $M$ is biholomorphic to one of the zero-section of the holomorphic line bundle $\O(a,2) \rightarrow S^2 \times S^2$. 
Recall that 
$c_{1}(N_{\Sigma/M})(B)=[\omega_{a,2}](B)=2$ and 
$c_{1}(N_{\iota(\{\mathrm{pt} \} \times S^2)/\Sigma})(B)=0$.
In light of \cite[Lemma 3.3.1]{MSbook}, 
these evaluations of the first Chern classes together with integrability of $J_{0}$ near $\Sigma$ show 
that every simple $J_{0}$-holomorphic sphere near $\Sigma$ homologous to $B$ is Fredholm regular. 

Define the (unparametrized) $J_{0}$-holomorphic spheres $Q_{0}$ and $Q_{\infty}$ in $\Sigma$ to be 
$\iota(S^2 \times \{y_0\})$ and $\iota(S^2 \times \{y_{\infty}\})$ with $y_0 \neq y_{\infty}$, respectively. 
Note that $Q_{0}$ and $Q_{\infty}$ are disjoint. 
Consider the moduli space $\M(B; J_{0};Q_{0} \times Q_{\infty})$ consisting of all $J_{0}$-holomorphic spheres 
$u \colon \CP^1 \rightarrow M$ and points $z_{0}, z_{\infty}$ of $\CP^{1}$ such that $[u(\CP^1)]=B \in H_{2}(M;\Z)$ and 
$u(z_{0}) \in Q_{0}$ and $u(z_{\infty}) \in Q_{\infty}$: 
$$
	\M(B; J_{0};Q_{0} \times Q_{\infty}) = \left\{ 
	\begin{gathered}
	(u,z_{0},z_{\infty}) \in W^{1,p}(\CP^{1}, M) \times \CP^{1} \times \CP^{1} \mid \\
	u \textrm{ is $J_{0}$-holomorphic, }[u(\CP^{1})]=B \textrm{ and } u(z_{i}) \in Q_{i} \textrm{ for } i=0,\infty
	\end{gathered}
	\right\}.
$$
For the sake of simplicity we abbreviate $\M \coloneqq \M(B;J_{0};Q_{0} \times Q_{\infty}).$

According to the above argument, $J_0$ is chosen to make a neighborhood $U$ of $\Sigma$ biholomorphic to one of the zero-section of $\O(a,2)$ and all simple $J_{0}$-holomorphic spheres $u \colon \CP^1 \rightarrow M$ in $\M(B; J_{0};Q_{0} \times Q_{\infty})$ with $u(\CP^1) \subset U$ Fredholm regular. 
Moreover, thanks to \cite[Theorem 4.4.3]{Wen}, we can actually perturb this almost complex structure outside $U$ in such a way that the resulting almost complex structure lies in $\mathcal{J}_{\mathrm{reg}}(B)$ and all simple $J_{0}$-holomorphic spheres in $\M(B; J_{0};Q_{0} \times Q_{\infty})$ are Fredholm regular; hence we take such an almost complex structure as $J_0$ from the begginning.
By Lemma \ref{lem: H2isoinclusion}, the homology class $B$ is primitive in $H_2(M; \Z)$, and all $J_{0}$-holomorphic spheres in the moduli space are simple. 
It follows that $\M$ is a smooth oriented manifold of dimension $10$. 

\textbf{Step 2.}
We show that the evaluation map 
$\mathrm{ev} \colon \M \times_{G} \CP^{1} \rightarrow M$ is proper and is of degree one, where  
$G=\mathrm{Aut}(\CP^{1})$ acts on $\M \times \CP^{1}$ by 
$$
	g \cdot ((u,z_{0},z_{\infty}),z) \coloneqq ((u \circ g^{-1}, g(z_{0}), g(z_{\infty})), g(z))
$$
for $(u,z_{0},z_{\infty}) \in \M$, $z \in \CP^{1}$ and $g \in G$. 

By the assumption that $B$ is $J_{0}$-indecomposable, the space $\M \times_{G} \CP^{1}$ is compact 
(cf. the proof of \cite[Lemma 7.1.8]{MSbook}), and hence 
$\mathrm{ev}$ is proper.

To compute the degree of $\mathrm{ev}$, we claim that every point $p_{0} \in \Sigma \setminus (Q_{0} \cup Q_{\infty})$ is a regular value of $\mathrm{ev}$, and that 
$\mathrm{ev}^{-1}(p_{0})$ consists of only one point. This implies $\deg (\mathrm{ev})=1$.  By positivity of intersections, the unique $J_{0}$-holomorphic sphere in $\Sigma$ 
passing through $p_{0}$ in the class $B$ intersects $Q_{0}$ and $Q_{\infty}$. 
Note that $$
	B \cdot [\Sigma] = \int_{B} \omega = \int_{F}\omega_{a,2}=2, 
$$
that is, the intersection number of $J_{0}$-holomorphic spheres in $M$ representing $B$ with $\Sigma$ equals $2$. 
It follows from positivity of intersections 
that 
any $J_{0}$-holomorphic sphere passing through $Q_{0}$, $Q_{\infty}$ and $p_{0}$ must lie in $\Sigma$, and hence 
it is unique and $\mathrm{ev}^{-1}(p_0)$ is a one-point set.

We next prove that $p_{0}$ is a regular value of $\mathrm{ev}$. It is easy to see that for a given vector $Y \in T_{p_{0}}u(\CP^{1}) \subset T_{p_{0}}M$, 
there exists a vector $X \in T_{[(u,0,\infty), 1]}(\M \times_{G} \CP^{1})$ such that $d_{[(u,0,\infty),1]}\mathrm{ev}(X)=Y$.
To deal with normal directions, consider the pull-back bundle of the normal bundle $N_{u(\CP^{1})/M}$ to $u(\CP^{1})$ in $M$ under the unique $J_{0}$-holomorphic 
sphere $u \colon \CP^{1} \rightarrow M$ with $u(0) \in Q_{0}$, $u(\infty)\in Q_{\infty}$ and $u(1)=p_{0}$. 
In $\Sigma$, the self-intersection number of the sphere $u(\CP^1)$ is $0$, so that its normal bundle in $\Sigma$ is trivial. 
The bundle $u^{*}N_{u(\CP^{1})/M} \rightarrow \CP^{1}$ is isomorphic to 
$$
	u^{*}N_{u(\CP^{1})/M} \cong u^{*}N_{u(\CP^{1})/\Sigma}\oplus u^{*}N_{\Sigma/M} \rightarrow \CP^1, 
$$
namely $\O \oplus \O(2) \rightarrow \CP^{1}$ as holomorphic vector bundles. 
Hence, for a given vector $Y$ of $(N_{u(\CP^{1})/ M})_{p_{0}}$, 
holomorphic sections of this normal bundle give a curve $(-\epsilon, \epsilon) \ni s \mapsto [(u_{s},0,\infty), 1] \in \M \times_{G} \CP^{1}$ 
with $u_{0}=u$ such that 
$$
	d_{[(u,0,\infty),1]}\mathrm{ev}\left( \left. \frac{d[(u_{s},0,\infty),1]}{ds} \right|_{s=0} \right) 
	= \left. \frac{du_{s}(1)}{ds} \right|_{s=0}=Y \in T_{p_{0}}M.
$$
We conclude that the differential of $\mathrm{ev}$ at $p_0$ is surjective, and hence $p_{0}$ is a regular value. 

\textbf{Step 3.}
We construct a map $\varphi \colon \pi_{1}(X) \rightarrow \pi_1(D_0(\hat \xi))$, where $X = M \setminus \Sigma$ and the space $D_0(\hat \xi)$ is defined below.
 
Let 
$\xi$ be the total space of the normal bundle $N_{\Sigma/M}$ to $\Sigma$ in $M$  
restricted to $Q_{0}$. 
Denoting by $\nu_{\Sigma}(Q_{0})$ a tubular neighborhood  
of $Q_{0}$ in $\Sigma$, we write 
$\hat{\xi}$ for the total space of the restricted normal bundle $N_{\Sigma/M}|_{\nu_{\Sigma}(Q_{0})}$. 
Equip $N_{\Sigma/M}$ with a bundle metric and 
consider the total space $D(\hat{\xi})$ of the disk bundle associated to $N_{\Sigma/M}|_{\nu_{\Sigma}(Q_{0})}$,
and $D(\xi) \subset D(\hat{\xi})$ is defined in the same manner.
We will identify $D(\hat{\xi})$ with a tubular neighborhood of $\nu_{\Sigma}(Q_{0})$ in $M$ and 
regard $\nu_{\Sigma}(Q_{0})$ as a submanifold of this tubular neighborhood (see Figure \ref{figure} 
for schematic pictures).  The space $D_{0}(\hat{\xi})$ is defined to be the complement of the zero-section in 
$D(\hat{\xi})$. 
Notice that $D_{0}(\hat{\xi})$ is homotopic to the total space of a circle bundle associated to $\hat{\xi} \rightarrow \nu_{\Sigma}(Q_0)$, which is moreover homotopic to that of a circle bundle associated to $\xi=N_{\Sigma/M}|_{Q_0} \rightarrow Q_0$. 
This line bundle is isomorphic to $\O(a) \rightarrow \CP^1$ since $c_1(\xi)=a$ by (\ref{eqn: Chern number}); hence the fundamental group of $D_0(\hat{\xi})$ is isomorphic to $\Z/a\Z$.

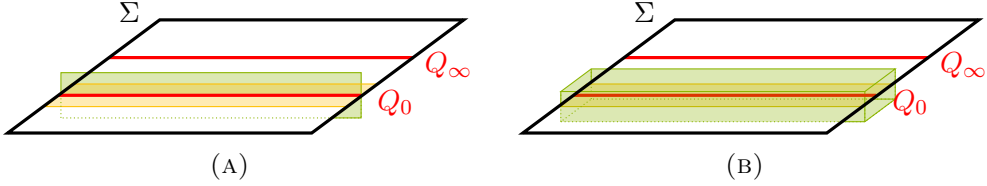
\begin{figure}[htb]
	\begin{subfigure}[b]{0.4\textwidth}
	\begin{tikzpicture}


\draw[draw=red, very thick] (1.33,1) -- (5.34,1);
\node at (5.8,0.9) {\color{red}$Q_{\infty}$} ;

\draw[draw=applegreen] (0.70,0.80) rectangle (4.65,0.50);
\fill[fill=applegreen, opacity=0.3] (0.70,0.80) rectangle (4.65,0.50);


\draw[draw=amber] (0.5,0.35) -- (4.48,0.35) -- (4.87,0.65) -- (0.84,0.65) -- cycle;
\fill[fill=amber, opacity=0.3] (0.5,0.35) -- (4.48,0.35) -- (4.87,0.65) -- (0.84,0.65) -- cycle;

\draw[draw=applegreen, densely dotted] (0.70,0.20) rectangle (4.65,0.50);

\draw[draw=applegreen] (4.65,0.20) -- (4.65,0.50) -- (4.25,0.20) -- cycle;
\fill[fill=applegreen, opacity = 0.3] (4.65,0.20) -- (4.65,0.50) -- (4.25,0.20);



\draw[draw=red, very thick] (0.67,0.5) -- (4.65,0.5) ;
\node at (5.1,0.4) {\color{red}$Q_0$};

\draw[draw=black, very thick]  (0,0) -- (4,0) -- (6,1.5) --  (2,1.5) --  cycle ;
\node at (1.6,1.6) {$\Sigma$};

\end{tikzpicture}	
\subcaption{}
\end{subfigure} \qquad
	\begin{subfigure}[b]{0.4\textwidth}
	\begin{tikzpicture}


\draw[draw=red, very thick] (1.33,1) -- (5.34,1) ;
\node at (5.8,0.9) {\color{red}$Q_{\infty}$} ;  ;


\draw[draw=amber] (0.5,0.35) -- (4.48,0.35) -- (4.87,0.65) -- (0.84,0.65) -- cycle;
\fill[fill=amber, opacity=0.3] (0.5,0.35) -- (4.48,0.35) -- (4.87,0.65) -- (0.84,0.65) -- cycle;

\draw[draw=applegreen, densely dotted] (0.5, 0.15) -- (4.5, 0.15) -- (4.9, 0.45) -- (0.9, 0.45) -- cycle;
\draw[draw=applegreen] (0.5, 0.55) -- (4.5, 0.55) -- (4.9, 0.85) -- (0.9, 0.85) -- cycle;
\draw[draw=applegreen] (0.5, 0.35) -- (0.5, 0.55);
\draw[draw=applegreen] (4.5, 0.15) -- (4.5, 0.55);
\draw[draw=applegreen] (4.9, 0.35) -- (4.9, 0.85);
\draw[draw=applegreen] (4.9, 0.45) -- (4.5, 0.15);
\draw[draw=applegreen] (4.5,0.15)  -- (4.2, 0.15);
\draw[draw=applegreen] (0.9,0.85)  -- (0.9, 0.65);
\draw[draw=applegreen] (4.9,0.45)  -- (4.6, 0.45);
\draw[draw=applegreen, densely dotted] (0.5, 0.15) -- (0.5, 0.55);
\draw[draw=applegreen, densely dotted] (4.5, 0.15) -- (4.5, 0.55);
\draw[draw=applegreen, densely dotted] (4.9, 0.45) -- (4.9, 0.85);
\draw[draw=applegreen, densely dotted] (0.9, 0.45) -- (0.9, 0.85);

\fill[fill=applegreen, opacity=0.3] (0.5, 0.15) -- (0.5, 0.35) -- (4.5, 0.35) -- (4.5, 0.15);

\draw[draw=red, very thick] (0.67,0.5) -- (4.65,0.5);
\node at (5.1,0.4) {\color{red}$Q_0$};

\fill[fill=applegreen, opacity=0.3] (0.5, 0.55) -- (4.5, 0.55) -- (4.9, 0.85) -- (0.9, 0.85) -- cycle;
\fill[fill=applegreen, opacity=0.3] (0.5, 0.55) -- (0.5, 0.35) -- (4.5, 0.35) -- (4.5, 0.55) -- cycle;
\fill[fill=applegreen, opacity=0.3] (4.5, 0.15) -- (4.5, 0.55) -- (4.9, 0.85) -- (4.9, 0.45) -- cycle;



\draw[draw=black, very thick]  (0,0) -- (4,0) -- (6,1.5) --  (2,1.5) --  cycle ;
\node at (1.6,1.6) {$\Sigma$};

\end{tikzpicture}
\subcaption{}
	\end{subfigure}
	\caption{(A) The neighborhoods $\nu_{M}(\Sigma)|_{Q_0}$ (green) and $\nu_{\Sigma}(Q_0)$ (yellow). (B) The neighborhood $D(\hat \xi)$ (green).}
	\label{figure}
\end{figure}


Now we construct a map $\varphi \colon \pi_{1}(X) \rightarrow \pi_{1}(D_{0}(\hat{\xi})) \cong \Z/a\Z$; 
this map shall provide the desired isomorphism for the assertion of the lemma.
Take a point $p \in X$ sufficiently close to a point on $\Sigma \setminus (Q_{0} \cup Q_{\infty})$ 
so that it is a regular value of the map $\mathrm{ev}$. 
We can perturb 
any loop $f \colon S^1 \rightarrow X$ based at $p$  
so that it is an embedding and transverse to the evaluation map $\mathrm{ev}$. 
Then, thanks to transversality, 
$\mathrm{ev}^{-1}(f(S^1))$ is a $1$-dimensional submanifold in $(\M \times_{G} \CP^{1}) \setminus \mathrm{ev}^{-1}(\Sigma)$ parametrized by $t \in S^1 = \R/\Z$. 
In fact, the component of $\mathrm{ev}^{-1}(f(S^1))$ mapping surjectively onto $f(S^1)$ is a circle since $\mathrm{ev}^{-1}(p)$ is a one-point set. 
Write 
$u_{t}^{f} \colon \CP^{1} \rightarrow M$ for 
a $J_{0}$-holomorphic sphere in this component such that 
$u_{t}^{f}(0) \in Q_{0}$, $u_{t}^{f}(\infty) \in Q_{\infty}$ and $u_{t}^{f}(1)=f(t)$, i.e.,  
$$
\mathrm{ev}([(u_{t}^{f},0,\infty), 1])=f(t).
$$ 
Note that $u_{0}^{f}=u_{1}^{f}$, and $u_{0}^{f}$ depends only on $p$ (not $f$); this 
allows us to set $u_{0} \coloneqq u_{0}^{f}$.
Take a sufficiently small real number $\epsilon_0 > 0$ such that the image $u_0 (D^2(\epsilon_0))$ of the closed disk $D^2(\epsilon_0) = \{ z \in \C \mid |z| \leq \epsilon_{0} \}$  is contained in $D(\hat \xi)$. We set 
$$p' \coloneqq u_0(\epsilon_0)$$ 
to be a base point for loops on $D_0(\hat \xi)$. Depending on the circle $u_t^f$ of $J_{0}$-holomorphic spheres, we can also take $
\epsilon >0$ such that $u_t^f(D^2(\epsilon))$ is contained in $D(\hat \xi)$ for all $t$. This provides the loop $g(f)$ in $D_0(\hat \xi)$ given by $g(f)(t) \coloneqq u_t^f(\epsilon)$. We may assume that $\epsilon < \epsilon_0$, and since $D_0(\hat \xi)$ is path-connected, we can regard $g(f)(t)$ as an element of $\pi_1(D_0(\hat \xi), p')$ up to base point change. Note that the choice of $\epsilon$ does not affect the homotopy class $[g(f)(t)]_{D_0 (\hat \xi)} \in \pi_1(D_0(\hat \xi), p')$. Assuming that $[g(f)(t)]_{D_0 (\hat \xi)}$ actually depends only on the homotopy class $[f]_X \in \pi_1(X, p)$, which will be shown in the next step,  we define a map $\varphi \colon \pi_{1}(X,p) \rightarrow \pi_{1}(D_{0}(\hat{\xi}), p')$ by
$$\varphi([f]_{X})=[g(f)]_{D_{0}(\hat{\xi})}.$$


\textbf{Step 4.}
We prove that $\varphi$ is a well-defined homomorphism.

To see its well-definedness as a map, 
take two homotopic smooth embeddings $f_{1}, f_{2} \colon S^1 \rightarrow X$ based at $p$ which are transverse to $\mathrm{ev}$. 
Since $\dim X=6 \geq 4=2(\dim S^1+1)$, there exists an isotopy $F \colon S^1 \times I \rightarrow X$ between $f_1$ and $f_2$ (see \cite[Exercise 8.1.10]{Hirsch}). 
We may assume that $F$ is transverse to $\mathrm{ev}$. 
By transversality, $\mathrm{ev}^{-1}(F(S^1 \times I))$ is an embedded submanifold of dimension $2$, 
which shows the lifts $\tilde{f}_{1}$ and $\tilde{f}_{2}$ in $(\M \times_{G}\CP^{1}) \setminus \mathrm{ev}^{-1}(\Sigma)$
of $f_{1}$ and $f_{2}$, respectively, are homologous. 
Hence, the corresponding loops $g(f_{1})$ and $g(f_{2})$ in $D_{0}(\hat{\xi})$ are homologous. 
As any two homologous loops in the latter space are homotopic, 
 $g(f_{1})$ and $g(f_{2})$ are homotopic, and $\varphi$ is a well-defined map.

To show that $\varphi$ is a homomorphism, 
take two smooth loops $f_{1}$ and $f_{2}$ based at $p$ which are embeddings and transverse to $\mathrm{ev}$. 
Let $f_{1,2}$ be a smoothing of the composition $f_{1} \cdot f_{2}$ meeting the same conditions as $f_i$. 
Using the transversality argument as in the previous paragraph, we find that respective lifts $\tilde{f}_{1} \cdot \tilde{f}_{2}$ and $\tilde{f}_{1,2}$ of $f_1 \cdot f_2$ and $f_{1,2}$ are homologous in $(\M \times_{G}\CP^{1}) \setminus \mathrm{ev}^{-1}(\Sigma)$, 
which implies that $g(f_{1} \cdot f_{2})$ and $g(f_{1, 2})$ are homologous in $D_{0}(\hat{\xi})$. 
Moreover, we find that the former lift $\tilde{f}_{1} \cdot \tilde{f}_{2}$ provides the family of holomorphic spheres defined by 
$$
	u_t=
	\begin{cases}
	u_{2t}^{f_1} & \textrm{for\ } t \in [0,1/2], \\
	u_{2t-1}^{f_2} & \textrm{for\ }t \in [1/2,1],
	\end{cases}
$$
and this yields a loop in $D_0(\hat{\xi})$ homologous to $g(f_1) \cdot g(f_2)$. 
Therefore, $g(f_1) \cdot g(f_2)$ is homologous to $g(f_1 \cdot f_2)$, and $\varphi([f_1]_{X}) \cdot \varphi([f_2]_{X})=\varphi([f_1]_{X} \cdot [f_2]_{X})$.

\textbf{Step 5.} 
We prove that $\varphi$ is an isomorphism, and hence $\pi_{1}(X)$ is isomorphic to $\Z/a\Z$. 

To show the injectivity of $\varphi$, suppose that
 $\varphi([f]_{X})=[g(f)]_{D_{0}(\hat{\xi})}$ is the identity element. 
Then, the map $S^1 \times [0,1] \rightarrow X$, $(t,s) \mapsto u_t^{f}((1-s)\epsilon_0+s)$ yields a free homotopy between $g(f)$ and $f$. 
As $g(f)$ is null-homotopic in $D_{0}(\hat{\xi})$, this proves that $[f]_{X}$ is trivial and $\varphi$ is injective.
 

Next, to see its surjectivity, 
consider the loop $f_{0}(t)$ in $X$ defined by $$f_{0}(t) = u_{0}(e^{2\pi i t}).$$ 
We can see that each point $u_{0}(e^{2\pi i t})$ on this loop is a regular value of $\mathrm{ev}$ as follows:  
Identify $\nu_{M}(\Sigma)$ with a neighborhood of the zero-section of 
$\O(a,2) \rightarrow S^2 \times S^2$. 
Then, one can take a holomorphic section $\tau \colon S^2 \times S^2 \rightarrow \O(a,2)$ such that $u_{0}$ lies on the image of $\tau$. 
In particular, $u_{0}(e^{2\pi i t})$ can be seen as a point of $\tau(S^2 \times S^2)$. 
By replacing $S^2 \times S^2$ by $\tau (S^2 \times S^2)$, 
an argument similar to that for regularity in Step 2 concludes that $u_{0}(e^{2 \pi i t })$ is a regular value of $\mathrm{ev}$.

Now we claim that the loop $g_{0}(t)$ in $D_{0}(\hat{\xi})$ defined by 
$$\label{def: g_0}
	g_{0}(t) \coloneqq g(f_{0})(t) = u_{0}(\epsilon_{0}e^{2\pi i t})
$$
gives rise to 
a generator of $\pi_{1}(D_{0}(\hat{\xi}), p')$. 
This loop bounds the disk $u_{0}(D^2(\epsilon_{0}))$, 
which intersects $\Sigma$ only at one point, namely $u_{0}(0) \in Q_{0}$. 
Let $S(\xi)$ and $S(\hat{\xi})$ denote the total spaces of the circle bundles associated to the restricted normal bundles 
$\xi \rightarrow Q_{0}$ and $\hat{\xi} \rightarrow \nu_{\Sigma}(Q_{0})$, 
respectively. 
The disk $u_{0}(D^2(\epsilon_{0}))$ may be assumed to give an element of not only 
$H_{2}(D(\hat{\xi}), S(\hat{\xi}); \Z)$ but also $H_{2}(D(\xi), S(\xi); \Z)$. 
Letting $I \colon H_{2}(D(\xi), S(\xi); \Z) \times H_{2}(D(\xi);\Z) \rightarrow 
H_{0}(D(\xi); \Z) \cong \Z$ be the intersection pairing, 
we have 
$$
	I([u_{0}(D^2(\epsilon_{0}))], [Q_{0}]) =1,
$$
where $[Q_{0}]$ is the class of the image of the zero-section of $D(\xi) \rightarrow Q_{0}$. 
Coupled with the fact that $H_{2}(D(\xi), S(\xi); \Z) \cong H^{2}(D(\xi); \Z) \cong \Z$, this shows that 
$[u_{0}(D^{2}(\epsilon_{0}))]$ is nontrivial, and especially it is a generator of 
$H_{2}(D(\xi), S(\xi); \Z)$. 
The group $H_{1}(D(\xi); \Z) \cong H_{1}(Q_{0}; \Z)$ is trivial, and hence the homomorphism 
$\del_{*} \colon H_{2}(D(\xi), S(\xi); \Z) \rightarrow H_{1}(S(\xi); \Z)$ appearing in the homology long exact sequence for 
the pair $(D(\xi), S(\xi))$ is surjective. 
Therefore, $\del_{*}$ maps  
$[u_{0}(D^2(\epsilon_{0}))]$ to a generator of $H_{1}(S(\xi); \Z) \cong \Z/a\Z$, and by definition we have 
$$\del_{*}([u_{0}(D^2(\epsilon_{0}))]) = [g_{0}].$$
Since the three groups $\pi_{1}(D_{0}(\hat{\xi}), p')$, $\pi_{1}(D_{0}(\xi))$ and $H_{1}(S(\xi); \Z)$ can be canonically identified, 
$[g_{0}]$ is a generator of $\pi_{1}(D_{0}(\hat{\xi}), p')$. This completes the proof of the case $(S^2 \times S^2, \omega_{a,2})$. 

\textbf{Case $(S^2 \widetilde{\times} S^2, \widetilde{\omega}_{a,2})$.}

Finally, we discuss the case of the nontrivial $S^2$-bundle. 
For use of complex structure, we regard $S^2 \widetilde{\times} S^2$ as $\CP^2 \# \overline{\CP}^2$ as in Section \ref{sec: Symplectic structures on S2tilde{x}S2}. 
Recall that $\tilde{p} \colon \CP^2 \# \overline{\CP}^2 \rightarrow \CP^1$ is 
the bundle projection and $F$ is its fiber. 
Also, $\pi \colon \CP^2 \# \overline{\CP}^2 \rightarrow \CP^2$ denotes the blow-up map, and $H$ and $E$ are the proper transform of a generic line in $\CP^2$ and the exceptional sphere, respectively. 
Note that $H$ and $E$ serve as disjoint holomorphic sections of $\tilde{p}$. 

Now we take the bundle $\widetilde{\O}(a,2)\coloneqq \tilde{p}^{*}\O(a-2) \otimes \pi^{*}\O(2) \rightarrow \CP^2 \# \overline{\CP}^2$ instead of $\O(a,2) \rightarrow S^2 \times S^2$ in the previous case. 
Note that $c_1(\widetilde{\O}(a,2))=c_1(N_{\Sigma/M})=[\widetilde{\omega}_{a,2}]$.
Let $\widetilde{J}_{0}$ be an element of $\mathcal{J}_{\mathrm{reg}}(B)$ which makes a neighborhood of $\Sigma$ biholomorphic to a neighborhood of the zero-section of $\widetilde{\O}(a,2)$ and all simple $\widetilde{J}_0$-holomorphic spheres in the fiber class $B$ Fredholm regular. 
Set 
$$
	\widetilde{Q}_{0} \coloneqq \iota(E)\quad \textrm{and} \quad \widetilde{Q}_{\infty} \coloneqq \iota(H)
$$
and  
$$
	\widetilde{\M}= \left\{ 
	\begin{gathered}
	(u,z_{0},z_{\infty}) \in W^{1,p}(\CP^{1}, M) \times \CP^{1} \times \CP^{1} \mid \\
	u \textrm{ is $\widetilde{J}_{0}$-holomorphic, }[u(\CP^{1})]=B \textrm{ and } u(z_{i}) \in \widetilde{Q}_{i} \textrm{ for } i=0,\infty
	\end{gathered}
	\right\}.
$$
Then, the space $\widetilde{\M} \times_{G} \CP^1$ is a compact oriented $6$-dimensional manifold under the assumption of the $\widetilde{J}_0$-indecomposability of $B$. 
Furthermore, the evaluation map $\mathrm{ev} \colon \widetilde{\M} \times_{G} \CP^1 \rightarrow M$ is of degree $1$ as in the previous case. 

What is left is the construction of an isomorphism $\pi_{1}(X) \rightarrow \Z/(a-2)\Z$. 
In this case, $\Z/(a-2)\Z$ comes from a space corresponding to $D_0(\hat{\xi})$, namely the complement of $\Sigma$ in a neighborhood of $\widetilde{Q}_{0}$: 
To see this, let $\widetilde{D}_{0}$ be this complement. 
A neighborhood of $\widetilde{Q}_0$ can be identified with a neighborhood of the zero section of $N_{\Sigma/M}|_{\nu_{\Sigma}(\widetilde{Q}_0)}$. 
Since 
$$
	c_1(N_{\Sigma/M})([\widetilde{Q}_{0}])=[\widetilde{\omega}_{a,2}]([E])=a-2,
$$
the bundle $N_{\Sigma/M}|_{\widetilde{Q}_0} \rightarrow \widetilde{Q}_0$ is isomorphic to a line bundle over the $2$-sphere with the minimal Chern number $a-2$. 
Hence, $\widetilde{D}_0$ is homotopic to the associated circle bundle and has the fundamental group isomorphic to $\Z/(a-2)\Z$. 
The similar argument to the previous case gives an isomorphism $\varphi \colon \pi_{1}(X) \rightarrow \pi_1(\widetilde{D}_{0}) \cong \Z/(a-2)\Z$.
\end{proof}

\begin{remark}
In the above proof, showing surjectivity is rather complicated. 
Here we would like to explain the reason for this. 
Given any loop $g \in D_0(\hat{\xi})$ based at $p'$ and transverse to the evaluation map $\mathrm{ev}$, similarly to Step 3, we get a family $u_{t}^{g} \colon \CP^1 \rightarrow M$ of holomorphic spheres such that $u_{t}^{g}(\epsilon)=g(t)$ and $u_{0}^{g}(1)=p$ for some $0 < \epsilon< 1$. 
One would expect that $\varphi([u_{t}^{g}(1)]_{X})=[g]_{D_0(\hat{\xi})}$, and surjectivity follows. 
However, notice that $u_{t}^{g}$ can escape from $D(\hat{\xi})$ along the interval $[0,\epsilon]$ in general.  
This fact makes us unable to prove that $\varphi([u_{t}^{g}(1)]_{X})=[g]_{D_0(\hat{\xi})}$. 
Thus, we employed the specific loop $g_0(t)=u_0(\epsilon_0 e^{2\pi it})$, in the proof, which bounds a disk in $D(\hat{\xi})$ containing the whole of $u_0([0,\epsilon_0])$.
\end{remark}

The $J$-indecomposability assumption in Lemma \ref{lemma: pi_1} is satisfied for most positive integers $a$:

\begin{lemma}\label{lemma: indecomposability}
Let $(M,\omega)$ be a closed integral symplectic $6$-manifold admitting a symplectic hyperplane section 
symplectomorphic to either $(S^2 \times S^2, \omega_{a,b})$ or $(S^2 \widetilde{\times} S^2, \widetilde{\omega}_{a,b})$. 
Then, the following hold: 
\begin{enumerate}
\item\label{item: trivial case} If $a \geq 5$ and $b=2$ for $\omega_{a,b}$, then the homology class $B$ corresponding to $[F]$ is 
$J$-indecomposable for $J \in \mathcal{J}_{\textrm{reg}}(B)$; 
\item\label{item: non-trivial case}  If $a \geq 7$ and $b=2$ for $\widetilde{\omega}_{a,b}$, then the homology class $B$ corresponding to $[F]$ is 
$J$-indecomposable for $J \in \mathcal{J}_{\textrm{reg}}(B)$. 
\end{enumerate}
\end{lemma}

\begin{proof}
We only prove (\ref{item: trivial case}). 
The proof of (\ref{item: non-trivial case}) is similar to that of (\ref{item: trivial case}). 

Let $\iota \colon S^2 \times S^2 \hookrightarrow M$ denote a symplectic embedding with $\iota(S^2 \times S^2)=\Sigma$. 
Set 
$$
	A = \iota_{*}[S^2 \times \{\mathrm{pt}\}] \textrm{ and } B = \iota_{*}[F]=\iota_{*}[\{ \mathrm{pt} \} \times S^2] \in H_{2}(M; \Z).
$$ 
Suppose on the contrary that $B$ is not $J$-indecomposable, that is, there exist homology classes $C_{1}, \ldots, C_{N} \in H_{2}(M; \Z)$ ($N \geq 2$)
such that $B=C_{1}+ \cdots + C_{N}$ and each $C_{i}$ is represented by a nonconstant $J$-holomorphic sphere $u_i \colon \CP^1 \rightarrow M$. 
Since $[\omega]$ has an integral lift, 
the minimal symplectic energy of nonconstant $J$-holomorphic spheres in $M$ must be $1$, and this implies that $N=2$ and the symplectic energy $E(u_i)\coloneqq \int_{\CP^1} u_i^{*}\omega $ of $u_i$ is $1$ for $i=1,2$. 
Moreover, by Lemma \ref{lem: H2isoinclusion}, 
we can represent each $C_{i}$ as a linear combination of $A$ and $B$; namely
$C_{i}=k_{i}A+\ell_{i}B$ for some $k_{i}, \ell_{i} \in \Z$ with 
\begin{equation} \label{eqns}
	k_{1}+k_{2}=0 \ \ \textrm{and\ \ } \ell_{1}+\ell_{2}=1.  
\end{equation}
Recall that $c_1(T(S^2\times S^2))=2\mathrm{PD}(A)+2\mathrm{PD}(B)$ given in Lemma \ref{lem: Chern class} (\ref{c_1 of S2xS2}). 
The expected dimension of the moduli space of (unparametrized) $J$-holomorphic spheres $u\colon \CP^{1} \rightarrow M$ with 
$[u(\CP^1)]= kA + \ell B$ is given by
$$
	\chi(\CP^{1}) \cdot \frac{\dim M}{2} + 2c_{1}(TM)([u(\CP^{1})])-\dim \mathrm{Aut}(\CP^{1}) =  2((a+2)k+4\ell),   
$$
where $\chi(\CP^{1})$ is the Euler characteristic of $\CP^{1}$.
Suppose that the symplectic energy of such $J$-holomorphic curves is equal to $1$, i.e., 
$$
	E(u) = \int_{kA+ \ell B} \iota^{*}\omega=ak+2\ell=1. 
$$
This assumption yields the fact that all $J$-holomorphic spheres in this moduli space are simple; 
As a consequence, the moduli space is a manifold. 
For this space to be nonempty, $k$ and $\ell$ need to satisfy the following inequality and equation: 
$$
	2((a+2)k+ 4\ell) \geq 0  \ \ \textrm{and\ \ } 
	ak+2\ell=1.
$$
From these we have 
\begin{equation}\label{ineq: moduli}
	k \leq \frac{2}{a-2} <1,  
\end{equation}
where the last inequality follows from the assumption that $a \geq 5$.

Let us return to our case. 
By the inequality (\ref{ineq: moduli}) combined with the equation for $k_{i}$ in (\ref{eqns}), we obtain 
$k_{1} = k_{2} = 0$. 
However, note that there is no $J$-holomorphic sphere $u \colon \CP^{1} \rightarrow M$ with $[u(\CP^{1})]=\ell_{i}B$ and $E(u)=1$. This leads to 
a contradiction.  
\end{proof}



We are ready to prove the second assertion in Theorem \ref{thm: embeddability}.

\begin{proof}[Proof of Theorem \ref{thm: embeddability} (\ref{item: non-embedding})]


In the following proof, we will use the same notations as 
in the proof of Lemma \ref{lemma: pi_1}. 
Also we will only discuss the case $(S^2 \times S^2, \omega_{a,2})$ ; the other case $(S^2 \widetilde{\times} S^2, \widetilde{\omega}_{a,2})$ is straightforward from the former case.
 
Suppose on the contrary that there exists a symplectic embedding 
$\iota\colon  (S^2 \times S^2, \omega_{a,2}) \rightarrow M$ into a closed integral symplectic $6$-manifold $(M, \omega)$ such that 
the image $\Sigma = \iota(S^2 \times S^2)$ is a symplectic hyperplane section on $(M, \omega)$. 
By Lemma \ref{lemma: indecomposability} (\ref{item: trivial case}), the homology class $B = \iota_{*}[F]$ is 
$J$-indecomposable for any $J \in \mathcal{J}_{\mathrm{reg}}(B)$, and $\Sigma$ and $B$ satisfy the assumption of Lemma \ref{lemma: pi_1}. 
Therefore the fundamental group $\pi_1(X)$ of the complement $X = M \setminus \Sigma$ is isomorphic to $\Z/a \Z$. 

Consider a sphere $R=\iota(F') \subset \Sigma$
passing through $u_{0}(0)$, where $F'$ is a fiber of $\mathrm{pr}_2 \colon S^2 \times S^2 \rightarrow S^2$.
Note that $[R]=B \in H_{2}(M; \Z)$. 
Let $\eta$ be the total space of the restricted normal bundle $N_{\Sigma/M}|_{R}$ and  
$\hat{\eta}$ the total space of the normal bundle $N_{\Sigma/M}|_{\nu_{\Sigma}(R)}$. 
Write $D(\eta)$, $S(\eta)$, $D(\hat{\eta})$ and $S(\hat{\eta})$ for the total spaces of the disk and circle bundles 
associated to $\eta$ and $\hat{\eta}$, respectively, 
with respect to some bundle metric on $N_{\Sigma/ M}$. 
Consider the loop $g_{0} \colon S^1 \rightarrow D_{0}(\hat{\xi})$ 
defined by 
$$
	g_{0}(t)=u_{0}(\epsilon_{0}e^{2\pi i t}).
$$ 
Taking $\epsilon_{0}$ sufficiently small if necessary, 
we may assume that the image $g_{0}(S^1)$ lies in $D_{0}(\hat{\xi}) \cap D_{0}(\hat{\eta})$, 
where $D_{0}(\hat{\eta})$ is the complement of the image of the zero-section in $D(\hat{\eta})$. 
Observe that $g_0(S^1)$ bounds a disk in $D(\hat{\eta})$, which intersects $Q_{0}$ only at one point as in the proof of surjectivity in Lemma \ref{lemma: pi_1}.
This shows that 
$g_{0}$ serves as a generator of the group 
$ \pi_{1}(D_{0}(\hat{\eta}), p') \cong \Z/2\Z$.

Write $j \colon D_{0}(\hat{\eta}) \hookrightarrow X$ for the inclusion map. 
Define the base change map $\phi_{h_{0}} \colon \pi_{1}(X, p') \rightarrow \pi_{1}(X, p)$ by 
$$
	\phi_{h_{0}}([f]_{(X,p')})=[h_{0} \cdot f \cdot h_{0}^{-1}]_{(X,p)}.
$$ 
Then, by definition we have $(\phi_{h_{0}} \circ j_{*})([g_{0}]_{D_{0}(\hat{\eta})})=[f_{0}]_{X}$, 
which is a generator 
of $\pi_{1}(X,p)$, and in particular it is a nontrivial element according to Lemma \ref{lemma: pi_1}. 
Hence, it follows that $\phi_{h_{0}} \circ j_{*} \colon  \pi_{1}(D_{0}(\hat{\eta}), p') \rightarrow \pi_{1}(X, p)$ is a 
nontrivial homomorphism. 
However, $\pi_{1}(D_{0}(\hat{\eta}), p') \cong \Z/2\Z \not\cong \Z/a\Z \cong \pi_{1}(X,p)$, and $a$ and $2$ are coprime. 
This contradicts the existence of the above nontrivial homomorphism. 
\end{proof}


\section{Fillability of Boothby--Wang bundles over rational ruled surfaces}\label{section: fillability}

%

\subsection{Boothby--Wang bundles over rational ruled surfaces}\label{section: BW}

Let $(M,\omega)$ be a closed integral symplectic manifold with a fixed integral lift $[\omega] \in H^2(M; \Z)$. 
There is a unique (up to isomorphism) principal $S^{1}$-bundle $p: P \rightarrow M$ with Euler class 
$e(P)=-[\omega]$. 
We can take a connection $1$-form $A \in \Omega^{1}(P)$ on $P$ with 
$dA=2\pi p^{*}\omega.$ 
Throughout this paper, we regard connection $1$-forms on principal $S^{1}$-bundles as usual $\R$-valued differential forms on them. 
By definition, the connection $1$-form $A$ serves as a contact form on $P$ whose Reeb orbits are fibers of $p$. 
We call the manifold $P$ the \textit{Boothby--Wang manifold} over $(M,\omega)$ 
and refer to the contact manifold $(P, \ker(A))$ as the 
\textit{Boothby--Wang contact manifold} over $(M,\omega)$.

Let $(P_{a,b}, \xi_{a,b})$ and 
$(\widetilde{P}_{a,b}, \widetilde{\xi}_{a,b})$ 
be the Boothby--Wang contact manifolds over the (integral) symplectic rational ruled surfaces
$(S^{2} \times S^{2}, \omega_{a,b})$ and $(S^{2} \widetilde{\times} S^{2}, \widetilde{\omega}_{a,b})$, respectively.

\begin{proposition}\label{prop: BW}
Suppose that $a$ and $b$ are coprime. 
Then, 
\begin{enumerate}
\item\label{item: BW1} $P_{a,b}$ is diffeomorphic to $S^{2} \times S^{3}$; 
$\widetilde{P}_{a,b}$ is diffeomorphic to $S^{2} \widetilde{\times} S^{3}$, where 
$S^{2} \widetilde{\times} S^{3}$ denotes the non-trivial $S^{3}$-bundle over $S^{2}$; 

\item\label{item: BW2} $c_{1}(\xi_{a,b})=(2a-2b)\gamma$ and $c_{1}(\widetilde{\xi}_{a,b})=(2a-3b)\widetilde{\gamma}$,
where $\gamma$ and $\widetilde{\gamma}$ are 
generators of $H^{2}(P_{a,b}; \Z)$ and $H^{2}(\widetilde{P}_{a,b}; \Z)$, respectively. 
\end{enumerate}
\end{proposition}

\begin{proof} 
In this proof, we only deal with $(P_{a,b}, \xi_{a,b})$. 
A similar argument also works for $(\widetilde{P}_{a,b}, \widetilde{\xi}_{a,b})$.

To prove (\ref{item: BW1}), we appeal to \cite[Proposition 8.2.1]{Gei} which states that a simply connected $5$-manifold $P$ with $w_2(P)=0$ and $H_2 (P;\Z)\cong \Z$ is diffeomorphic to $S^2 \times S^3$. 
We first claim that 
$H_{3}(P_{a,b}; \Z) \cong H^{2}(P_{a,b};\Z) \cong \Z$.
Applying the Gysin sequence for the bundle $\pi_{a,b} \colon P_{a,b} \rightarrow S^2 \times S^2$, we have 
\begin{align*}
	\cdots \rightarrow H^0(S^2 \times S^2; \Z) \xrightarrow{\smile e(P_{a,b})} H^2(S^2 \times S^2; \Z) \rightarrow H^2(P_{a,b}; \Z) \rightarrow 0 \rightarrow \cdots,
\end{align*}
which shows that the cohomology group $H^{2}(P_{a,b}; \Z)$ is isomorphic to 
$H^{2}(S^{2} \times S^{2}; \Z)/\Z \langle [\omega_{a,b}] \rangle$ since $e(P_{a,b})=-[\omega_{a,b}]$. 
To see that the latter group is isomorphic to $\Z$, 
consider the homomorphism defined by 
$$
	\Phi \colon H^{2}(S^{2} \times S^{2}; \Z) \cong \Z\langle \mathrm{pr}_1^{*}\omega_0 \rangle \oplus \Z\langle \mathrm{pr}_2^{*}\omega_0 \rangle \rightarrow \Z, \ \ 
	(r,s) \mapsto as-br,
$$
where $\mathrm{pr}_{i} \colon S^2 \times S^2 \rightarrow S^2$ denotes the $i$-th projection and $\omega_0$ is the standard area form on $S^2$ as in Section \ref{sec: Symplectic structures on S2xS2}.
The condition that $\Phi(r,s)=as-br=0$ is equivalent to $(r,s)=k(a,b)$ for some $k \in \Z$, which deduces that $\ker \Phi = \Z \langle [\omega_{a,b}] \rangle$. 
Moreover, since $a$ and $b$ are coprime, 
$\Phi$ is surjective. 
Hence, the first isomorphism theorem concludes the claim. 

Next, we prove that $P_{a,b}$ is simply connected. 
Together with the fact that $S^{2} \times S^{2}$ is simply connected, the homotopy exact sequence for the $S^1$-bundle $\pi_{a,b}$ implies that $\pi_{1}(P_{a,b})$ is abelian. 
Now in view of the isomorphism $\pi_{1}(P_{a,b}) \cong H_{1}(P_{a,b}; \Z) \cong H^{4}(P_{a,b}; \Z)$, it suffices to check that $H^{4}(P_{a,b};\Z)$ vanishes. 
To see this, consider the Gysin sequence for $\pi_{a,b}$ again:  
\begin{align*}
	\cdots \rightarrow H^2(S^2 \times S^2; \Z) \xrightarrow{\smile e(P_{a,b})} H^4(S^2 \times S^2; \Z) \rightarrow H^4(P_{a,b}; \Z) \rightarrow 0 \rightarrow \cdots,
\end{align*}
where the last trivial group in the sequence is $H^3(S^2 \times S^2; \Z)$. 
It turns out that $H^4(P_{a,b}; \Z) \cong H^4(S^2 \times S^2 ; \Z)/\mathrm{Im}(\smile e(P_{a,b}))$. 
The indivisibility of the Euler class $e(P_{a,b})$ implies the surjectivity of the map $\smile e(P_{a,b})$, and hence the group in question is trivial. 

The tangent bundle $TP_{a,b}$ splits into the horizontal and vertical directions as $TP_{a,b} \cong \pi_{a,b}^{*}T(S^2 \times S^2) \oplus \underline{\R}$. 
The second Stiefel--Whitney class $w_2(P_{a,b})$ of $P_{a,b}$ is given by 
$$
	w_2(P_{a,b}) =w_2(\pi_{a,b}^{*}T(S^2 \times S^2) \oplus \underline{\R})= \pi_{a,b}^{*}w_2(S^2 \times S^2)=0, 
$$ 
where the last equality follows from the fact that $S^2 \times S^2$ carries a spin structure. 

As $S^2\times S^2$ and $P_{a,b}$ are simply connected, the Hurewicz maps $\pi_{2}(S^2 \times S^2) \rightarrow H_2(S^2 \times S^2; \Z)$ and $\pi_{2}(P_{a,b}) \rightarrow H_2(P_{a,b};\Z)$ are isomorphisms. 
The homotopy exact sequence for the bundle $\pi_{a,b}$ reduces to the exact sequence of homology groups: 
$$
	 \cdots \rightarrow 0  \rightarrow  H_{2}(P_{a,b}; \Z) \xrightarrow{{\pi_{a,b}}_{*}} H_{2}(S^{2} \times S^{2}; \Z) \rightarrow \cdots. 
$$
It turns out that the induced map ${\pi_{a,b}}_{*} \colon H_{2}(P_{a,b}; \Z) \rightarrow H_{2}(S^{2} \times S^{2}; \Z) \cong \Z \oplus \Z$ is injective, which 
implies that $H_{2}(P_{a,b}; \Z) \cong H^{3}(P_{a,b}; \Z)$ is torsion free. 
Thus, the universal coefficient theorem combined with the above claim concludes that 
$$
	H_{2}(P_{a,b}; \Z) \cong \mathrm{Hom}(H^{2}(P_{a,b}; \Z), \Z) \oplus 
	\mathrm{Ext}(H^{3}(P_{a,b}; \Z), \Z) \cong \Z. 
$$
Now (\ref{item: BW1}) follows from \cite[Proposition 8.2.1]{Gei} as we wish.

In order to prove (\ref{item: BW2}), we first note that $\xi_{a,b}$ and $\pi_{a,b}^{*}T(S^2 \times S^2)$ are mutually isomorphic as complex vector bundles for a suitable complex structure on $\xi_{a,b}$. 
Hence,  
$$
	c_{1}(\xi_{a,b})=\pi_{a,b}^{*} c_{1}(T(S^{2} \times S^{2})). 
$$ 
Observe that the induced map $\pi_{a,b}^{*} \colon H^2(S^2 \times S^2; \Z) \rightarrow H^2(P_{a,b}; \Z) \cong \Z$ agrees with $\Phi$ up to sign. 
By Lemma \ref{lem: Chern class}, 
$$
	c_{1}(\xi_{a,b})=\Phi(2, 2)=(2a-2b)\gamma  
$$ 
with a generator $\gamma$ of $H^{2}(P_{a,b}; \Z)$. 
\end{proof}

\subsection{Stein fillable contact structures}\label{section: fillable}

Let us prove Theorem \ref{thm: fillability} (\ref{item: critically fillable}) and (\ref{item: subcritically fillable}). 

\begin{proof}[Proof of Theorem \ref{thm: fillability} (\ref{item: critically fillable}) and (\ref{item: subcritically fillable})]

Let $(M, \omega)$ be the complex quadric in $(\CP^{4}, \omega_{\text{FS}})$ and $\Sigma$ the hyperplane section in $M$ as in Section 
\ref{section: embedding}: 
$$
	M = \left\{ (z_{0}: z_{1}: z_{2}: z_{3}: z_{4})\in \CP^{4} \ \middle| \ z_{0}^{2}+z_{1}^{2}+z_{2}^{2}+z_{3}^{2}+z_{4}^{2}=0 \right\}, \ \ 
	\Sigma =M \cap \{ z_{4}=0\}. 
$$
By Proposition \ref{prop: quadric}, $(\Sigma, \omega|_{\Sigma})$ is a symplectic hyperplane section on $(M, \omega)$ 
and is symplectomorphic to $(S^{2} \times S^{2}, \omega_{1,1})$. 
Hence, the contact manifold $(P_{1,1}, \xi_{1,1})$ is obtained as the convex boundary of 
the complement of a symplectic tubular neighborhood of $(\Sigma, \omega|_{\Sigma})$ in $M$. 
Furthermore, $M \setminus \Sigma$ admits a Stein structure and 
is written as the affine hypersurface $X$ given by 
$$
	X=\left\{ (w_{0},w_{1}, w_{2}, w_{3})\in \C^{4} \ \middle| \ w_{0}^{2}+w_{1}^{2}+w_{2}^{2}+w_{3}^{2}+1=0 \right\}.
$$
This $X$ is diffeomorphic to the cotangent bundle $T^{*}S^{3}$. 
Indeed, an explicit diffeomorphism $X \rightarrow T^{*}S^3$ is given by 
$$
	X \rightarrow T^{*}S^3, \quad w \mapsto \left( \frac{\mathrm{Im}(w)}{|\mathrm{Im}(w)|},  \mathrm{Re}(w)|\mathrm{Im}(w)|\right),
$$
where we identify $T^{*}S^3$ with $\{(q,p) \in \R^4 \times \R^4 \mid \|q\|=1, q \cdot p=0\}$ (cf. \cite[Exercise 6.3.7]{McS}).
According to \cite[Theorem 1.2(1)]{BGZ}, 
all Stein fillings of a subcritically Stein fillable contact manifold have isomorphic 
homology groups. 
In particular, their middle-dimensional ones are trivial. 
Thus, $H_{3}(X; \Z) \cong H_3(T^*S^3;\Z)$ being non-trivial implies that 
$(P_{1,1}, \xi_{1,1})$ does not admit a subcritical Stein filling, 
which concludes that $\xi_{1,1}$ is critically Stein fillable. 

In the rest of the proof, we show the second assertion of $\xi_{a,1}$ with $a \geq 2$; 
the same argument also works for $\widetilde{\xi}_{a,1}$.
Following the proof of Theorem \ref{thm: embeddability} (\ref{item: embedding}), 
we have an identification 
$$
	(S^{2} \times S^{2}, \omega_{a,1}) \cong (\Sigma_{m,n}, \eta_{m,n}) 
$$
for some $m$ and $n$, and $(\Sigma_{m,n}, \eta_{m,n})$ is a symplectic hyperplane section on $(\P(F_{m}), \Omega_{m})$. 
Thus, the Boothby--Wang contact manifold $(P_{a,1}, \xi_{a,1})$ appears as the boundary of the complement of 
a symplectic tubular neighborhood 
of $\Sigma_{m,n}$ in $\P(F_{m})$. 
Employing the result of Biran and Cieliebak in \cite[Theorem 2.4.1]{BC}, 
this complement is in fact a subcritical Stein domain. 
Therefore, $\xi_{a,1}$ is subcritically Stein fillable. 
\end{proof}

\begin{remark}\label{rem: notsubcriticalfillable}
In each homotopy class of almost contact structures on $S^2\times S^3$ and $S^{2} \widetilde{\times} S^{3}$, there is a unique contact structure which is subcritically Stein fillable \cite[Theorem 1.2]{DGZ}. In view of 
the result of Boyer and Pati \cite[Proposition 3.11]{BP}
and Theorem \ref{thm: fillability}, we deduce that any contact structure $\xi_{a, b}$ (and also $\widetilde{\xi}_{a,b}$) with $b\neq 1$ is not subcritically Stein fillable.
\end{remark}

\subsection{Stein non-fillable contact structures}\label{section: non-fillable}

We will prove Stein non-fillability by contradiction. 
To do this, let us suppose that a Boothby--Wang contact manifold over a closed integral symplectic manifold 
$\Sigma$ admits a Stein filling. 
By capping off the boundary, 
we can construct a closed symplectic manifold containing $\Sigma$ as a symplectic hyperplane section.  
In what follows we first present 
this construction and 
then give a proof of Theorem \ref{thm: fillability} (\ref{item: non-fillable}). 
A similar construction of closed symplectic manifolds can be found in \cite[Section 6.1]{CDvK}. 

Let $(W, d\lambda)$ be a Stein domain whose contact boundary is contactomorphic to the total space of 
the Boothby--Wang bundle 
$$
	p \colon (P, \ker A) \rightarrow (\Sigma, \omega_{\Sigma})
$$
over an integral symplectic manifold $(\Sigma, \ow_{\Sigma})$ with the Euler class equal to a fixed integral lift $-[\omega_{\Sigma}] \in H^{2}(\Sigma; \Z)$. 

Consider the complex line bundle $p \colon E \rightarrow \Sigma$ with the first Chern class $c_1(E) = [\omega_{\Sigma}]$. 
Take a hermitian metric on $E$, and let $\Theta \in \Omega^1(E \setminus \Sigma)$ be a connection 1-form, called the global angular form, with $d\Theta = -p^*\omega_{\Sigma}$. 
Denote the corresponding unit disk bundle by $N \rightarrow \Sigma$.
We may assume that the circle bundle $\partial N \rightarrow \Sigma$ coincides with the Boothby--Wang bundle $P \rightarrow \Sigma$ and that $\Theta|_{\p N} = - A/2\pi$. 
Following \cite[Section 2.2]{BiKh13}, we equip $E$ with the symplectic form $\omega_{E}$ given by
\begin{equation}\label{eq: omega_E}
\omega_{E} =  -d(f(r^2) \Theta) = -df(r^2) \wedge \Theta + f(r^2) p^*\omega_{\Sigma}
\end{equation}
where $r$ denotes the norm on $E$ with respect to the hermitian metric, and $f$ is a positive smooth function such that $f(0) = 1$ and $f' < 0$. 
Note that $\omega_{E}$ is exact away from the zero section with a primitive $-f(r^2) \Theta$.  
The corresponding Liouville vector field is transverse to the boundary $\p N$ of $N$ pointing inwards. 
Up to Liouville homotopy, we may assume that $f(1)A/2\pi = \lda|_{\partial W}$ by \cite[Lemma 4.25]{Gu19}. 
Hence the concave boundary $(\overline{\del N}, -f(1) \Theta|_{\del N})$ is strictly contactomorphic to $(\del W, \lambda|_{\del W})$, where $\overline{\del N}$ denotes $\del N$ with the reversed orientation; see also Remark \ref{rem: orientation} below.
Using Liouville flows, collar neighborhoods of $\del W$ in $W$ and $\overline{\del N}$ in $N$ are identified with 
$$
	((-\epsilon, 0] \times P, d(e^{t+\log f(1)}A)/2\pi) \quad \textrm{and} \quad  ([0, \epsilon) \times P, d(e^{t+\log f(1)}A)/2\pi) 
$$
for some $\epsilon>0$, respectively. 
This allows us to glue $W$ and $N$ together symplectically along their boundary.
Let $(M,\omega)$ denote the glued symplectic manifold $(W,d\lambda) \cup (N, \omega_{E}|_{N})$. 

\begin{remark}\label{rem: orientation}
The contact structure $\ker(-f(r^2)\Theta|_{\del N})$ is negative on $\del N$ when it is oriented as the boundary of $N$. 
A straightforward computation yields 
$$
	(-f(r^2)\Theta|_{\del N}) \wedge (-d(f(r^2)\Theta|_{\del N}))^{n-1} = \frac{1}{n}i_{V} (-d(f(r^2) \Theta))^n, 
$$
where $\dim N=2n-1$ and $V$ is the Liouville vector field which is $-d(f(r^2) \Theta)$-dual to $-f(r^2) \Theta$. 
Since $V$ points inwards along $\del N$, the above equality implies that the given orientation of $\del N$ does not agree with that induced by the contact form. 
As a byproduct, $\ker(-f(r^2)\Theta|_{\del N})$ is positive on $\overline{\del N}$.
\end{remark}

\begin{proposition}\label{prop: gluingandembedded}
The symplectic manifold $(M,\omega)$ contains $(\Sigma, \omega_{\Sigma})$ as 
a symplectic hyperplane section. 
\end{proposition}

\begin{proof}
Since $\Sigma$ is a symplectic submanifold of codimension $2$ and its complement is a Stein manifold, 
it remains to show that $[\omega]=\mathrm{PD}[\Sigma]$. Recall that the symplectic form $\omega$ is given by the symplectic form $d\lda$ on the Stein domain piece $W$ and by the symplectic form $\ow_{E}|_{N}$ on the disk bundle piece. In particular $d\lda$ is exact and hence is vanishing in the cohomology $H_c^2(\Int W)$. In light of the Mayer--Vietoris sequence for the decomposition $M = W \cup N$ which is compatible with taking Poincar\'e dual, it suffices to show that the Poincar\'e dual of the cohomology class $[\ow_{E}] \in H^2_c(E)$ is given by the homology class of the zero section $[\Sigma] \in H_{2n-2}(E)$. In view of \eqref{eq: omega_E}, we see that $[\ow_{E}]$ gives the Thom class of the line bundle $E \rightarrow \Sigma$ as described e.g. in \cite[Equation (6.40)]{BoTu82}. See also \cite[Section 2.2]{BiKh13}. Therefore its Poincar\'e dual is the homology class of the zero section by \cite[Proposition 6.24(b)]{BoTu82}.
\end{proof}

Using Proposition \ref{prop: gluingandembedded}, we prove Theorem \ref{thm: fillability} (\ref{item: non-fillable}).

\begin{proof}[Proof of Theorem \ref{thm: fillability} (\ref{item: non-fillable})]
Suppose on the contrary that $(P_{a, 2}, \xi_{a, 2})$ admits a Stein filling $W$. By Proposition \ref{prop: gluingandembedded}, we can construct a closed integral symplectic $6$-manifold $M$ such that 
the rational ruled surface $(\mathbb P (E_0), \Omega_{a, 2})$ is embedded in $M$ as a symplectic hyperplane section. This contradicts Theorem \ref{thm: embeddability} (\ref{item: non-embedding}) as we have assumed $a \geq 5$. The same argument shows $\widetilde{\xi}_{a, 2}$ with $a \geq 6$ is Stein non-fillable.
\end{proof}

\subsection{Lerman's question}\label{section: almost contact}

Now we shall address Lerman's question on distinguishing contact structures with the same underlying almost contact structure (see Question \ref{q: Lerman} below).

Let us first recall that an \textit{almost contact structure} on an oriented manifold $P$ is 
a cooriented codimension $1$ subbundle $\xi \subset TP$ together 
with a complex bundle structure on it. We say that two almost contact structures $\xi$ and $\xi'$ on $P$ are \textit{equivalent} 
if they can be connected by a combination of smooth homotopies of almost contact structures on $P$ 
and orientation-preserving diffeomorphisms on $P$. 

We deduce the following corollary from Proposition \ref{prop: BW}. 
 
\begin{corollary}\label{cor: equivalencefoalmostcontactstructures}
Let $\xi_{a,b}$ (resp. $\widetilde{\xi}_{a,b}$) be the contact structure on $P_{a,b}$ (resp. $\widetilde{P}_{a,b}$) given as in Section \ref{section: BW}.
Suppose that each pair of $(a,b)$ and $(a',b')$ is coprime. 

\begin{enumerate}
\item $\xi_{a,b}$ and $\xi_{a',b'}$ are equivalent as almost contact structures if and only if $a-b=a'-b'$; 
\item $\widetilde{\xi}_{a,b}$ and $\widetilde{\xi}_{a',b'}$ are equivalent as almost contact structures if and only if $2a-3b=2a'-3b'$.
\end{enumerate}
\end{corollary}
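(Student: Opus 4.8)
The plan is to reduce the classification of almost contact structures on these $5$-manifolds to the computation of a single numerical invariant, namely the first Chern class, which was already determined in Proposition \ref{prop: BW}. The key structural input is that the almost contact structures $\xi_{a,b}$ all live on the \emph{same} underlying smooth manifold (up to orientation-preserving diffeomorphism): by Proposition \ref{prop: BW}\,(1), each $P_{a,b}$ with $(a,b)$ coprime is diffeomorphic to $S^{2} \times S^{3}$, and each $\widetilde{P}_{a,b}$ to $S^{2} \widetilde{\times} S^{3}$. So the comparison of almost contact structures across different parameter values is meaningful, and the question becomes a purely homotopy-theoretic classification of almost contact structures on a fixed simply connected spin $5$-manifold.

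First I would invoke the obstruction-theoretic classification of almost contact structures on simply connected $5$-manifolds. On such a manifold, almost contact structures are classified (up to homotopy) by their first Chern class $c_{1}(\xi) \in H^{2}(P; \Z)$, together with the relevant higher obstruction, which vanishes here because the spaces are spin and the top-dimensional obstruction lives in a group that is torsion-controlled; since $H^{2}(P_{a,b};\Z) \cong \Z$ is torsion-free, the first Chern class is a complete invariant. This is the standard fact (e.g. via the work on almost contact structures on $5$-manifolds) that on $S^{2}\times S^{3}$ and $S^{2}\widetilde{\times} S^{3}$ two almost contact structures are homotopic if and only if they have the same first Chern class. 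Combined with Proposition \ref{prop: BW}\,(2), which gives $c_{1}(\xi_{a,b})=(2a-2b)\gamma$ and $c_{1}(\widetilde{\xi}_{a,b})=(2a-3b)\widetilde{\gamma}$, homotopy equivalence of $\xi_{a,b}$ and $\xi_{a',b'}$ is then equivalent to the equality $2a-2b=2a'-2b'$, i.e.\ $a-b=a'-b'$; likewise $\widetilde{\xi}_{a,b}\simeq\widetilde{\xi}_{a',b'}$ iff $2a-3b=2a'-3b'$.

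There is one subtlety I would address carefully: the generators $\gamma$ of $H^{2}(P_{a,b};\Z)$ for different values of $(a,b)$ must be compared under a \emph{fixed} identification of the smooth manifold. In other words, when I write $c_{1}(\xi_{a,b})=(2a-2b)\gamma$ and $c_{1}(\xi_{a',b'})=(2a'-2b')\gamma$, I must ensure that the diffeomorphism $P_{a,b}\to P_{a',b'}$ carries $\gamma$ to $\gamma$ (rather than to $-\gamma$). Since the definition of equivalence of almost contact structures permits orientation-preserving diffeomorphisms, I would note that any orientation-preserving self-diffeomorphism of $S^{2}\times S^{3}$ acts on $H^{2}\cong\Z$ as $\pm 1$, so in principle the invariant is only $|2a-2b|$. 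However, because $a,b$ are positive with the convention $a-mb>0$ from Section \ref{sec: Symplectic structures onPEm}, the relevant quantities are nonnegative and the sign ambiguity does not cause collisions among genuinely distinct classes; one checks directly that $|a-b|=|a'-b'|$ with both nonnegative forces $a-b=a'-b'$ in the range under consideration. I expect this sign/normalization bookkeeping to be the main (though mild) obstacle; the homotopy-theoretic classification itself is a citation.

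Thus the overall argument is: (i) identify the underlying manifolds via Proposition \ref{prop: BW}\,(1); (ii) apply the classification of almost contact structures on simply connected spin $5$-manifolds by $c_{1}$; (iii) substitute the values of $c_{1}$ from Proposition \ref{prop: BW}\,(2); and (iv) resolve the sign ambiguity coming from orientation-preserving diffeomorphisms to conclude the stated equivalences. Both the ``if'' and ``only if'' directions follow simultaneously from the fact that $c_{1}$ is a complete invariant here, once step (iv) is handled.
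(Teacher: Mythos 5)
Your proposal is correct and takes essentially the same route as the paper: the paper simply cites Hamilton's classification (\cite[Corollary 10]{Ham}), which says that two almost contact structures on these Boothby--Wang $5$-manifolds are equivalent if and only if their first Chern classes have the same maximal divisibility in integral cohomology, and then substitutes the values from Proposition \ref{prop: BW}; your steps (ii) and (iv) are precisely the two ingredients (homotopy classification by $c_1$ on a simply connected spin $5$-manifold with torsion-free $H^2$, plus the $\pm 1$ action of orientation-preserving diffeomorphisms on $H^2\cong\Z$) that are packaged into that single citation. The sign subtlety you flag is genuine, but your claim that the relevant quantities are automatically nonnegative is not quite accurate (for $S^2\times S^2$ only $a>0$ is assumed, so $a-b$ may be negative, and $2a-3b$ may be negative even when $a>b$); the honest invariant is $|a-b|$ (resp. $|2a-3b|$), which the paper's phrasing via ``maximal divisibility'' records correctly, and which agrees with the stated condition $a-b=a'-b'$ in the parameter range actually used later.
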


\begin{proof}
According to \cite[Corollary 10]{Ham}, two almost contact structures $\xi_{a,b}$ and $\xi_{a',b'}$ are equivalent 
if and only if $c_{1}(\xi_{a,b})$ and $c_{1}(\xi_{a', b'})$ 
have the same maximal divisibility in the integral cohomology; 
this also applies to $\widetilde{\xi}_{a,b}$ and $\widetilde{\xi}_{a',b'}$.
Therefore the corollary follows from Proposition \ref{prop: BW} immediately. 
\end{proof}

\begin{question}[{Lerman \cite[Question 1]{Ler}}]\label{q: Lerman}
Suppose that Boothby--Wang contact manifolds $(P_{a,b}. \xi_{a,b})$ and $(P_{a',b'}, \xi_{a',b'})$ with $(a,b) \neq (a',b')$ have equivalent underlying almost contact structures. 
Then, are they contactomorphic?
\end{question}

We give a partial negative answer to this question. 

\begin{proposition}\label{prop: Lerman}
Let $(P_{a,b}, \xi_{a,b})$ be the Boothby--Wang contact manifolds over $(S^{2} \times S^{2}, \omega_{a,b})$ as above. 
Then, $\xi_{a,1}$ and $\xi_{a+1,2}$ with $a \geq 4$ are equivalent as almost contact structures but not contactomorphic.
\end{proposition}

\begin{proof}
As $a-1=(a+1)-2$, Corollary \ref{cor: equivalencefoalmostcontactstructures} shows that $\xi_{a,1}$ and $\xi_{a+1,2}$ are equivalent as almost contact structures. 
By Theorem \ref{thm: fillability}, $\xi_{a,1}$ is subcritically Stein fillable whereas $\xi_{a+1,2}$ is not Stein fillable. 
This implies that these contact structures are not contactomorphic. 
\end{proof}

\begin{remark} \label{rem: almostWeinconstrs}
Combining the results in Theorems \ref{thm: embeddability} and \ref{thm: fillability},
we obtain interesting contact structures on $S^2 \times S^3$ (and also on $S^2 \tilde{\times } S^3$) in terms of various fillability: \emph{There are infinitely many  pairwise non-contactomorphic contact structures on $S^2 \times S^3$ which are strongly fillable and almost Weinstein fillable, but not Stein fillable.} Those contact structures are given by the contact structures of the form $\xi_{a, 2}$ on $P_{a, 2}$ with $a \geq 5$. 
Note that the underlying almost contact structure of $\xi_{a, 2}$ is equivalent to the one of $\xi_{a-1, 1}$ by Proposition \ref{prop: Lerman}. 
Furthermore, since the latter is Stein fillable, it follows that $\xi_{a, 2}$ is almost Weinstein fillable. 
Since $\xi_{a,2}$ comes from a Boothby--Wang bundle, the associated disk bundle provides a strong symplectic filling.  
Corollary \ref{cor: equivalencefoalmostcontactstructures} proves that the contact structures $\xi_{a,2}$ have pairwise inequivalent underlying almost contact structures; in particular they are pairwise non-contactomorphic.
For more information on almost  Weinstein fillability, we refer the reader to \cite{BCS}, \cite[Section 2.1.2]{La20} and \cite[Section 1.3]{Zh19}.
\end{remark}

\addcontentsline{toc}{chapter}{Bibliography}
\bibliographystyle{alpha}
\bibliography{S2*S2}

\begin{thebibliography}{CDvK14}

\bibitem[B{\u{a}}d81]{Ba2}
Lucian B{\u{a}}descu.
\newblock On ample divisors. {II}.
\newblock In {\em Proceedings of the {W}eek of {A}lgebraic {G}eometry
  ({B}ucharest, 1980)}, volume~40 of {\em Teubner-Texte zur Math.}, pages
  12--32. Teubner, Leipzig, 1981.

\bibitem[B{\u{a}}d82]{Ba}
Lucian B{\u{a}}descu.
\newblock On ample divisors.
\newblock {\em Nagoya Math. J.}, 86:155--171, 1982.

\bibitem[BC01]{BC}
Paul Biran and Kai Cieliebak.
\newblock Symplectic topology on subcritical manifolds.
\newblock {\em Comment. Math. Helv.}, 76(4):712--753, 2001.

\bibitem[BCS14]{BCS}
Jonathan Bowden, Diarmuid Crowley, and Andr\'{a}s~I. Stipsicz.
\newblock The topology of {S}tein fillable manifolds in high dimensions {I}.
\newblock {\em Proc. Lond. Math. Soc. (3)}, 109(6):1363--1401, 2014.

\bibitem[BGZ19]{BGZ}
Kilian Barth, Hansj\"{o}rg Geiges, and Kai Zehmisch.
\newblock The diffeomorphism type of symplectic fillings.
\newblock {\em J. Symplectic Geom.}, 17(4):929--971, 2019.

\bibitem[Bir01]{Bi01}
Paul Biran.
\newblock Lagrangian barriers and symplectic embeddings.
\newblock {\em Geom. Funct. Anal.}, 11(3):407--464, 2001.

\bibitem[Bir05]{BiranECM}
Paul Biran.
\newblock Symplectic topology and algebraic families.
\newblock In {\em European {C}ongress of {M}athematics}, pages 827--836. Eur.
  Math. Soc., Z\"{u}rich, 2005.

\bibitem[BK13]{BiKh13}
Paul Biran and Michael Khanevsky.
\newblock A {F}loer-{G}ysin exact sequence for {L}agrangian submanifolds.
\newblock {\em Comment. Math. Helv.}, 88(4):899--952, 2013.

\bibitem[Bor12]{Bor}
Matthew~Strom Borman.
\newblock Symplectic reduction of quasi-morphisms and quasi-states.
\newblock {\em J. Symplectic Geom.}, 10(2):225--246, 2012.

\bibitem[BP14]{BP}
Charles~P. Boyer and Justin Pati.
\newblock On the equivalence problem for toric contact structures on
  {$S^3$}-bundles over {$S^2$}.
\newblock {\em Pacific J. Math.}, 267(2):277--324, 2014.

\bibitem[BT82]{BoTu82}
Raoul Bott and Loring~W. Tu.
\newblock {\em Differential forms in algebraic topology}, volume~82 of {\em
  Graduate Texts in Mathematics}.
\newblock Springer-Verlag, New York-Berlin, 1982.

\bibitem[CDvK14]{CDvK}
River Chiang, Fan Ding, and Otto van Koert.
\newblock Open books for {B}oothby-{W}ang bundles, fibered {D}ehn twists and
  the mean {E}uler characteristic.
\newblock {\em J. Symplectic Geom.}, 12(2):379--426, 2014.

\bibitem[DGZ18]{DGZ}
Fan Ding, Hansj\"{o}rg Geiges, and Guangjian Zhang.
\newblock On subcritically {S}tein fillable 5-manifolds.
\newblock {\em Canad. Math. Bull.}, 61(1):85--96, 2018.

\bibitem[DL19]{DL}
Lu\'{\i}s Diogo and Samuel~T. Lisi.
\newblock Symplectic homology of complements of smooth divisors.
\newblock {\em J. Topol.}, 12(3):967--1030, 2019.

\bibitem[Don96]{Don}
Simon~K. Donaldson.
\newblock Symplectic submanifolds and almost-complex geometry.
\newblock {\em J. Differential Geom.}, 44(4):666--705, 1996.

\bibitem[EKP06]{EKP}
Yakov Eliashberg, Sang~Seon Kim, and Leonid Polterovich.
\newblock Geometry of contact transformations and domains: orderability versus
  squeezing.
\newblock {\em Geom. Topol.}, 10:1635--1747, 2006.

\bibitem[Fuj80]{Fuji}
Takao Fujita.
\newblock On the hyperplane section principle of {L}efschetz.
\newblock {\em J. Math. Soc. Japan}, 32(1):153--169, 1980.

\bibitem[Gei08]{Gei}
Hansj\"{o}rg Geiges.
\newblock {\em An introduction to contact topology}, volume 109 of {\em
  Cambridge Studies in Advanced Mathematics}.
\newblock Cambridge University Press, Cambridge, 2008.

\bibitem[GS99]{GS}
Robert~E. Gompf and Andr\'{a}s~I. Stipsicz.
\newblock {\em {$4$}-manifolds and {K}irby calculus}, volume~20 of {\em
  Graduate Studies in Mathematics}.
\newblock American Mathematical Society, Providence, RI, 1999.

\bibitem[Gut17]{Gu19}
Jean Gutt.
\newblock The positive equivariant symplectic homology as an invariant for some
  contact manifolds.
\newblock {\em J. Symplectic Geom.}, 15(4):1019--1069, 2017.

\bibitem[Ham13]{Ham}
Mark J.~D. Hamilton.
\newblock Inequivalent contact structures on {B}oothby-{W}ang five-manifolds.
\newblock {\em Math. Z.}, 274(3-4):719--743, 2013.

\bibitem[Hin03]{Hind2}
Richard Hind.
\newblock Stein fillings of lens spaces.
\newblock {\em Commun. Contemp. Math.}, 5(6):967--982, 2003.

\bibitem[Hin06]{Hind}
Richard Hind.
\newblock Symplectic hypersurfaces in {${\Bbb C}\rm P^3$}.
\newblock {\em Proc. Amer. Math. Soc.}, 134(4):1205--1211, 2006.

\bibitem[Hir76]{Hirsch}
Morris~W. Hirsch.
\newblock {\em Differential topology}.
\newblock Graduate Texts in Mathematics, No. 33. Springer-Verlag, New
  York-Heidelberg, 1976.

\bibitem[Laz20]{La20}
Oleg Lazarev.
\newblock Contact {M}anifolds with {F}lexible {F}illings.
\newblock {\em Geom. Funct. Anal.}, 30(1):188--254, 2020.

\bibitem[Ler03]{Ler}
Eugene Lerman.
\newblock Maximal tori in the contactomorphism groups of circle bundles over
  {H}irzebruch surfaces.
\newblock {\em Math. Res. Lett.}, 10(1):133--144, 2003.

\bibitem[LM96]{LMc}
Fran\c{c}ois Lalonde and Dusa McDuff.
\newblock The classification of ruled symplectic {$4$}-manifolds.
\newblock {\em Math. Res. Lett.}, 3(6):769--778, 1996.

\bibitem[McD90]{McD2}
Dusa McDuff.
\newblock The structure of rational and ruled symplectic {$4$}-manifolds.
\newblock {\em J. Amer. Math. Soc.}, 3(3):679--712, 1990.

\bibitem[McD91]{McD}
Dusa McDuff.
\newblock Symplectic manifolds with contact type boundaries.
\newblock {\em Invent. Math.}, 103(3):651--671, 1991.

\bibitem[MS04]{MSbook}
Dusa McDuff and Dietmar Salamon.
\newblock {\em {$J$}-holomorphic curves and symplectic topology}, volume~52 of
  {\em American Mathematical Society Colloquium Publications}.
\newblock American Mathematical Society, Providence, RI, 2004.

\bibitem[MS17]{McS}
Dusa McDuff and Dietmar Salamon.
\newblock {\em Introduction to symplectic topology}.
\newblock Oxford Graduate Texts in Mathematics. Oxford University Press,
  Oxford, third edition, 2017.

\bibitem[OO05]{OO}
Hiroshi Ohta and Kaoru Ono.
\newblock Simple singularities and symplectic fillings.
\newblock {\em J. Differential Geom.}, 69(1):1--42, 2005.

\bibitem[PP08]{PP}
Patrick Popescu-Pampu.
\newblock On the cohomology rings of holomorphically fillable manifolds.
\newblock In {\em Singularities {II}}, volume 475 of {\em Contemp. Math.},
  pages 169--188. Amer. Math. Soc., Providence, RI, 2008.

\bibitem[Sil77]{Silva}
Alessandro Silva.
\newblock Relative vanishing theorems. {I}. {A}pplications to ample divisors.
\newblock {\em Comment. Math. Helv.}, 52(4):483--489, 1977.

\bibitem[Som76]{Somm}
Andrew~John Sommese.
\newblock On manifolds that cannot be ample divisors.
\newblock {\em Math. Ann.}, 221(1):55--72, 1976.

\bibitem[Wen10]{Wen2}
Chris Wendl.
\newblock Strongly fillable contact manifolds and {$J$}-holomorphic foliations.
\newblock {\em Duke Math. J.}, 151(3):337--384, 2010.

\bibitem[Wen15]{Wen}
Chris Wendl.
\newblock Lectures on holomorphic curves in symplectic and contact geometry.
\newblock preprint,
  \url{https://www.mathematik.hu-berlin.de/~wendl/pub/jhol_bookv33.pdf}, 2015.

\bibitem[Zho21]{Zh19}
Zhengyi Zhou.
\newblock Symplectic fillings of asymptotically dynamically convex manifolds
  {I}.
\newblock {\em J. Topol.}, 14(1):112--182, 2021.

\end{thebibliography}

\end{document}